\documentclass[11pt,leqno]{article}
\usepackage{graphicx, amsfonts, amsthm, amsxtra, amssymb, verbatim, makeidx}
\usepackage[mathscr]{euscript}
\usepackage{hyperref}
\textheight 24truecm
\textwidth 16truecm
\addtolength{\oddsidemargin}{-1.05truecm}
\addtolength{\topmargin}{-2truecm}
\makeindex
\makeglossary
\begin{document}
\newtheorem{theo}{Theorem}
\newtheorem{exam}{Example}
\newtheorem{coro}{Corollary}
\newtheorem{defi}{Definition}
\newtheorem{prob}{Problem}
\newtheorem{lemm}{Lemma}
\newtheorem{prop}{Proposition}
\newtheorem{rem}{Remark}
\newtheorem{conj}{Conjecture}
\newtheorem{calc}{}
\newtheorem{property}{Property}
\newtheorem{hypo}{Hypothesis}


\def\Z{\mathbb{Z}}                   
\def\Q{\mathbb{Q}}                   
\def\C{\mathbb{C}}                   
\def\N{\mathbb{N}}                   
\def\uhp{{\mathbb H}}                
\def\A{\mathbb{A}}                   
\def\dR{{\rm dR}}                    
\def\F{{\cal F}}                     
\def\Sp{{\rm Sp}}                    
\def\Gm{\mathbb{G}_m}                 
\def\Ga{\mathbb{G}_a}                 
\def\Tr{{\rm Tr}}                      
\def\tr{{{\mathsf t}{\mathsf r}}}                 
\def\spec{{\rm Spec}}            
\def\ker{{\rm ker}}              
\def\GL{{\rm GL}}                
\def\k{{\sf k}}                     
\def\ring{{\sf R}}                   
\def\X{{\sf X}}                      
\def\T{{\sf T}}                      
\def\Ts{{\sf S}}
\def\cmv{{\sf M}}                    
\def\BG{{\sf G}}                       
\def\podu{{\sf pd}}                   
\def\ped{{\sf U}}                    
\def\per{{\sf  P}}                   
\def\gm{{\sf  A}}                    
\def\gma{{\sf  B}}                   
\def\ben{{\sf b}}                    

\def\Rav{{\mathfrak M }}                     
\def\Ram{{\mathfrak C}}                     
\def\Rap{{\mathfrak G}}                     

\def\nov{{\sf  n}}                    
\def\mov{{\sf  m}}                    
\def\Yuk{{\sf Y}}                     
\def\Ra{{\sf R}}                      
\def\hn{{\sf h}}                      
\def\cpe{{\sf C}}                     
\def\g{{\sf g}}                       
\def\t{{\sf t}}                       
\def\pedo{{\sf  \Pi}}                  

\def\Der{{\rm Der}}                   
\def\MMF{{\sf MF}}                    
\def\codim{{\rm codim}}                
\def\dim{{\rm    dim}}                
\def\Lie{{\rm Lie}}                   
\def\gg{{\mathfrak g}}                

\def\u{{\sf u}}                       

\def\imh{{  \Psi}}                 
\def\imc{{  \Phi }}                  
\def\stab{{\rm Stab }}               
\def\Vec{{\rm Vec}}                 
\def\prim{{\rm prim}}                  

\def\Fg{{\sf F}}     
\def\hol{{\rm hol}}  
\def\non{{\rm non}}  
\def\alg{{\rm alg}}  

\def\bcov{{\rm \O_\T}}       

\def\leaves{{\cal L}}        

\def\GM{{\rm GM}}

\def\perr{{\sf q}}        
\def\perdo{{\cal K}}   
\def\sfl{{\mathrm F}} 
\def\sp{{\mathbb S}}  

\newcommand\diff[1]{\frac{d #1}{dz}} 
\def\End{{\rm End}}              

\def\sing{{\rm Sing}}            
\def\cha{{\rm char}}             
\def\Gal{{\rm Gal}}              
\def\jacob{{\rm jacob}}          
\def\tjurina{{\rm tjurina}}      
\newcommand\Pn[1]{\mathbb{P}^{#1}}   
\def\Ff{\mathbb{F}}                  

\def\O{{\cal O}}                     
\def\as{\mathbb{U}}                  
\def\ring{{\mathsf R}}                         
\def\R{\mathbb{R}}                   

\newcommand\ep[1]{e^{\frac{2\pi i}{#1}}}
\newcommand\HH[2]{H^{#2}(#1)}        
\def\Mat{{\rm Mat}}              
\newcommand{\mat}[4]{
     \begin{pmatrix}
            #1 & #2 \\
            #3 & #4
       \end{pmatrix}
    }                                
\newcommand{\matt}[2]{
     \begin{pmatrix}                 
            #1   \\
            #2
       \end{pmatrix}
    }
\def\cl{{\rm cl}}                

\def\hc{{\mathsf H}}                 
\def\Hb{{\cal H}}                    
\def\pese{{\sf P}}                  

\def\PP{\tilde{\cal P}}              
\def\K{{\mathbb K}}                  

\def\M{{\cal M}}
\def\RR{{\cal R}}
\newcommand\Hi[1]{\mathbb{P}^{#1}_\infty}
\def\pt{\mathbb{C}[t]}               
\def\W{{\cal W}}                     
\def\gr{{\rm Gr}}                
\def\Im{{\rm Im}}                
\def\Re{{\rm Re}}                
\def\depth{{\rm depth}}
\newcommand\SL[2]{{\rm SL}(#1, #2)}    
\newcommand\PSL[2]{{\rm PSL}(#1, #2)}  
\def\Resi{{\rm Resi}}              

\def\L{{\cal L}}                     
\def\Aut{{\rm Aut}}              
\def\any{R}                          
\newcommand\ovl[1]{\overline{#1}}    

\newcommand\mf[2]{{M}^{#1}_{#2}}     
\newcommand\mfn[2]{{\tilde M}^{#1}_{#2}}     

\newcommand\bn[2]{\binom{#1}{#2}}    
\def\ja{{\rm j}}                 
\def\Sc{\mathsf{S}}                  
\newcommand\es[1]{g_{#1}}            
\newcommand\V{{\mathsf V}}           
\newcommand\WW{{\mathsf W}}          
\newcommand\Ss{{\cal O}}             
\def\rank{{\rm rank}}                
\def\Dif{{\cal D}}                   
\def\gcd{{\rm gcd}}                  
\def\zedi{{\rm ZD}}                  
\def\BM{{\mathsf H}}                 
\def\plf{{\sf pl}}                             
\def\sgn{{\rm sgn}}                      
\def\diag{{\rm diag}}                   
\def\hodge{{\rm Hodge}}
\def\HF{{\sf F}}                                
\def\WF{{\sf W}}                               
\def\HV{{\sf HV}}                                
\def\pol{{\rm pole}}                               
\def\bafi{{\sf r}}
\def\id{{\rm id}}                               
\def\gms{{\sf M}}                           
\def\Iso{{\rm Iso}}                           

\def\hl{{\rm L}}    
\def\imF{{\rm F}}
\def\imG{{\rm G}}


\def\HL{{\rm Ho}}     
\def\NLL{{\rm NL}}   

\def\RG{{\bf G}}     
\def\rg{{\bf g}}     
\def\rbullet{{\cdot}}

\def\Smat{{\sf S}}

\def\sinone{{W}}
\def\sintwo{{\tilde W}}

\begin{center}
{\LARGE\bf  Gauss-Manin connection in disguise: \\
Noether-Lefschetz and Hodge loci
\footnote{ 
Math. classification: 14N35, 
14J15, 32G20
\\
Keywords: Gauss-Manin connection, Hodge locus, Holomorphic foliations, Hodge filtration, Griffiths transversality, 
infinitesimal variation of Hodge structures, Kodaira-Spencer map. 
}
}
\\
\vspace{.25in} {\large {\sc Hossein Movasati}}\footnote{
Instituto de Matem\'atica Pura e Aplicada, IMPA, Estrada Dona Castorina, 110, 22460-320, Rio de Janeiro, RJ, Brazil (Present Address: Mathematics Department, Harvard University),
{\tt www.impa.br/$\sim$ hossein, hossein@impa.br}}
\end{center}

\begin{abstract}
 We give a classification of components of the Hodge locus in any 
parameter space of smooth projective varieties. This is done using  
determinantal varieties constructed from the infinitesimal variation 
of Hodge structures (IVHS) of the underlying family. As a corollary we prove that
the minimum codimension for the components of the Hodge locus in the parameter
space of $\mov$-dimensional hypersurfaces of degree
$d$ with $d\geq 2+\frac{4}{\mov}$ and in a Zariski neighborhood of the point representing
the Fermat variety,  is obtained by the locus
of hypersurfaces passing through an $\frac{\mov}{2}$-dimensional 
linear projective space. 
In the particular case of surfaces in the projective space of dimension three,
this is a theorem of Green and Voisin. In this case our
classification under a computational hypothesis on IVHS
implies  a weaker version of the Harris-Voisin conjecture which says that 
the set of special components of the 
Noether-Lefschetz locus  is not Zariski dense in the parameter space.   
\end{abstract}
\tableofcontents

\section{Introduction}
The Hodge conjecture implies that the components of the Hodge locus in a parameter space of smooth projective varieties  
are algebraic, and moreover, they are  
defined over the algebraic closure of the base field. 
The algebraicity statement has been  successfully proved by Cattani, Deligne and Kaplan in \cite{cadeka} using transcendental methods in Hodge theory, 
and hence, their proof does not give any light into the second part of the above statement. 
The classification of the components of the Hodge locus according
to their codimension is another important challenge in Hodge theory. 
Here, the Hodge conjecture has not so much to say. A typical
evidence to this is the particular case of Noether-Lefschetz locus, where the Hodge conjecture is known as Lefschetz $(1,1)$ theorem.
In this case it was observed that there are two classes of components, general and special ones. Ciliberto, Harris and Miranda in 
\cite{CHM88}  proved that general components are dense 
in the parameter space in both usual and Zariski topology. Harris conjectured that special components must be  
finite and Voisin found counterexamples to this, see \cite{voisin89, voisin90, voisin1991}. 
She then  conjectured that special components are not Zariski dense. We refer to this as the 
Harris-Voisin conjecture.

In the present article,  we first put the Harris-Voisin conjecture in the general framework
of Hodge loci and then we give a conjectural 
description of a proper algebraic subset of the parameter space which contains 
components of the Hodge locus in a wide range of codimensions. 
Partial verifications of our conjecture in the case of hypersurfaces give
us the precise description of the component which acquires the minimum codimension.  
Our results are stated using the infinitesimal variation of Hodge structures (IVHS) 
invented by Carlson, Donagi, Griffiths, Green and 
Harris in order to 
avoid the transcendental nature of the variation of Hodge structures, 
see \cite{CGGH1983}. 
The mentioned authors used IVHS in single points to attack classical problems such as 
Torelli problem and Noether's Theorem. 

For the proof of our main results we have to go back to the 
origin of IVHS which is the 
algebraic Gauss-Manin connection of the corresponding family. 
From this we 
construct a holomorphic foliation in a larger parameter space whose leaves and singularities 
are responsible for the classification 
of the components of the Hodge locus. 
Despite the fact that the concept of a foliation/integrable distribution is quit 
old in differential geometry, their applications to classical 
problems in algebraic geometry  and Hodge theory, 
in the way we do, must be considered as our main
contribution to the literature. 
The construction of such foliations goes back to the works of the author on differential 
equations of modular
forms and their generalization to Calabi-Yau varieties, 
see \cite{ho06-1,ho18} and the references therein. 
For many examples such foliations are given by vector fields which
are natural generalizations of Darboux, Halphen and Ramanujan vector fields. By Gauss-Manin connection in disguise
we mean such vector fields and foliations.
The terminology arose from a private letter of Pierre Deligne to the author \cite{del-letter}. 
For a fast review of the results in Hodge locus the reader is 
referred to Voisin's expository article \cite{voisinHL}. 

\subsection{Main results}
Let $Y\to V$ be a family of smooth complex projective varieties and let $V$ be irreducible, smooth and affine. 
For an even number $\mov$, an irreducible component $H$ of the Hodge locus $\HL_\mov(Y/V)$ is  any  irreducible closed subvariety of 
$V$ with a continuous family of Hodge classes  $\delta_t \in H^{\mov}(Y_t,\Q)\cap H^{\frac{\mov}{2},\frac{\mov}{2}}$ in varieties $Y_t, t\in H$ 
such that for points $t$ in a Zariski open subset of $H$, the monodromy of $\delta_t$  to a point in a neighborhood  
(in the classical topology of $V$) of $t$  and outside $H$,  is no more a Hodge class. 
Let 
\begin{equation}
\label{IVHS-sept}
H^1(Y_t,T_{Y_t})_\theta\times   H^{\mov-k}(Y_t,\Omega_{Y_t}^{k}) \to   H^{\mov-k+1 }(Y_t,\Omega_{Y_t}^{k-1})
\end{equation}
be  the $k$-th infinitesimal variation 
of Hodge structures at $t$, IVHS for short, with $k=\frac{\mov}{2}+1$ 
(we will not need the intersection form of IVHS except in \S \ref{popped}).
Here,  $H^1(Y_t,T_{Y_t})_\theta\subset H^1(Y_t,T_{Y_t})$ corresponds to projective deformations of $Y_t$, see \S\ref{16aug2014-2}.
From \eqref{IVHS-sept} we derive
\begin{equation}
\label{IVHS-2014}
H^{\mov-k+1 }(Y_t,\Omega_{Y_t}^{k-1})^{*}\to {\rm Hom} \left (H^1(Y_t,T_{Y_t})_\theta,  
H^{\mov -k}(Y_t,\Omega_{Y_t}^{k})^*  \right )
\end{equation}
where $*$ means dual and we have removed the zero element from both sides of \eqref{IVHS-2014}. 
Let $D_{s,t},\ s\in\N_0$ be the determinantal subvariety of the right hand side of \eqref{IVHS-2014}
consisting of homomorphisms of rank $\leq s$. 
For simplicity,  throughout the text we assume that the Kodaira-Spencer map
\begin{equation}
 \label{arezoyematoyito}
(T_V)_t\to H_1(Y_t,T_{Y_t})_\theta
\end{equation}
is surjective, however, all the arguments are valid when we replace $H_1(Y_t,T_{Y_t})_\theta$ with 
the image of \eqref{arezoyematoyito}. 
\begin{theo}
\label{maintheo}
 If the image of \eqref{IVHS-2014} with $k=\frac{\mov}{2}+1$  does not intersect $D_{s,t}$ for some $s$ and $t\in V$ then
 there is a Zariski open neighborhood $U$ of $t$ in $V$ such that 
 all the components of the Hodge locus $\HL_\mov(Y/V)$ intersecting $U$ have codimension $\geq s+1$. 
\end{theo}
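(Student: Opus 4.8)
The plan is to localize the Hodge locus near a point, recognize its defining equations as those of a normal function, and identify the differential of that normal function — at a point where the class is of Hodge type $(\frac{\mov}{2},\frac{\mov}{2})$ — with the infinitesimal variation of Hodge structures; the rank of this differential is then exactly the quantity controlled by the determinantal hypothesis. First I would reduce to a pointwise statement. Since $Y\to V$ is a smooth projective family the Hodge numbers are locally constant, so the spaces $H^{\mov-k+1}(Y_t,\Omega^{k-1}_{Y_t})^{*}$ form a vector bundle on $V$ whose projectivization $\mathbb P\to V$ is proper over $V$. Inside $\mathbb P$ the set of pairs $(t,[\lambda])$ for which the homomorphism $\phi_\lambda\in{\rm Hom}(H^1(Y_t,T_{Y_t})_\theta,H^{\mov-k}(Y_t,\Omega^k_{Y_t})^{*})$ produced by \eqref{IVHS-2014} has rank $\leq s$ is Zariski closed, hence has Zariski closed image in $V$; let $U$ be the complement of this image, a Zariski open set containing $t$. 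It therefore suffices to show: if $t_0\in U$ and $H$ is a component of $\HL_\mov(Y/V)$ with $t_0\in H$, then $\codim_V H\geq s+1$.

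So fix $t_0\in H\cap U$. On a small polydisc $\Delta\subset V$ around $t_0$ (after a finite cover trivializing the local system underlying the family $\{\delta_t\}$, if needed) let $\delta$ be the flat section of $R^{\mov}\pi_*\C\otimes\O_\Delta$ with $\delta(t_0)=\delta_{t_0}$; we may assume $\delta_{t_0}\neq 0$, since otherwise $H$ is not a genuine component. On $\Delta$ the locus where $\delta_t$ remains a Hodge class is $\{NF_\delta=0\}$, where $NF_\delta$ is the holomorphic section of the bundle $\mathcal H/F^{\mov/2}$ obtained by reducing $\delta$ modulo $F^{\mov/2}$ (here $\mathcal H$ is the de Rham bundle with its Gauss–Manin connection $\nabla^{\GM}$), and $H\cap\Delta\subseteq\{NF_\delta=0\}$. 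As $\delta_{t_0}$ has Hodge type $(\frac{\mov}{2},\frac{\mov}{2})$ it lies in $F^{\mov/2}_{t_0}$, so $NF_\delta(t_0)=0$ and the intrinsic differential $d(NF_\delta)_{t_0}\colon (T_V)_{t_0}\to(\mathcal H/F^{\mov/2})_{t_0}$ is defined. Writing $\delta=\sigma+\tau$ on $\Delta$ with $\sigma$ a holomorphic section of $F^{\mov/2}$, $\sigma(t_0)=\delta_{t_0}$, and $\tau(t_0)=0$, flatness of $\delta$ gives $d(NF_\delta)_{t_0}(v)=-\nabla^{\GM}_v\sigma|_{t_0}\bmod F^{\mov/2}_{t_0}$; by Griffiths transversality this lies in $F^{\mov/2-1}_{t_0}/F^{\mov/2}_{t_0}=H^{\mov-k+2}(Y_{t_0},\Omega^{k-2}_{Y_{t_0}})$, depends $\O$-linearly on $v$, and by Griffiths' description of the graded Gauss–Manin connection it factors through the Kodaira–Spencer map \eqref{arezoyematoyito} and equals, up to sign, the cup product
\[
\psi_{\delta_{t_0}}\colon H^1(Y_{t_0},T_{Y_{t_0}})_\theta\longrightarrow H^{\mov-k+2}(Y_{t_0},\Omega^{k-2}_{Y_{t_0}}),\qquad v\longmapsto v\smile\delta_{t_0},
\]
that is, the $(k-1)$-st IVHS evaluated at $\delta_{t_0}$. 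Since \eqref{arezoyematoyito} is surjective, $\rank d(NF_\delta)_{t_0}=\rank\psi_{\delta_{t_0}}$.

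Next I would identify this rank with the one appearing in the hypothesis. Serre duality on $Y_{t_0}$ turns $\psi_{\delta_{t_0}}$ into a homomorphism $H^1(Y_{t_0},T_{Y_{t_0}})_\theta\to H^{\mov-k}(Y_{t_0},\Omega^k_{Y_{t_0}})^{*}$ of the same rank, and the adjunction $\langle v\smile a,b\rangle=\pm\langle a,v\smile b\rangle$ between the $(k-1)$-st and $k$-th cup products of the IVHS identifies this homomorphism with $\phi_\lambda$, the image under \eqref{IVHS-2014} of the functional $\lambda\in H^{\mov-k+1}(Y_{t_0},\Omega^{k-1}_{Y_{t_0}})^{*}$ Serre-dual to $\delta_{t_0}\in H^{\mov-k+1}(Y_{t_0},\Omega^{k-1}_{Y_{t_0}})$. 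Since $\delta_{t_0}\neq 0$ we have $\lambda\neq 0$, so $t_0\in U$ forces $\phi_\lambda\notin D_{s,t_0}$, i.e.\ $\rank\psi_{\delta_{t_0}}\geq s+1$; hence $d(NF_\delta)_{t_0}$ has rank $\geq s+1$. Choosing a local holomorphic frame of $\mathcal H/F^{\mov/2}$ and $s+1$ coordinates of $NF_\delta$ in that frame whose differentials at $t_0$ are linearly independent, their common zero locus is a smooth codimension-$(s+1)$ submanifold of $\Delta$ containing $\{NF_\delta=0\}\supseteq H\cap\Delta$; as $H$ is irreducible and meets $\Delta$, $\codim_V H\geq s+1$, which is the assertion.

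The substantive step is the middle one: the local normal-function computation together with Griffiths' identification of the graded Gauss–Manin connection with cup product against Kodaira–Spencer classes, and the index and Serre-duality bookkeeping needed to land precisely in the spaces of \eqref{IVHS-sept}–\eqref{IVHS-2014} (matching the $k$-th IVHS in the hypothesis with the $(k-1)$-st IVHS that controls the first-order behaviour of the Hodge locus). Everything else — the openness of $U$ and the zero-locus codimension estimate — is formal. One may equivalently package this middle step through the holomorphic foliation on the enlarged parameter space of the introduction, whose tangency to the hypersurface ``$\delta$ stays a Hodge class'' cuts out $H$ and whose vertical part at $t_0$ has rank $\rank\psi_{\delta_{t_0}}$; this reformulation yields the same bound.
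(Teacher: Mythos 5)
Your argument is correct, and it is in substance the paper's \emph{second} proof of Theorem \ref{maintheo} (the one given in \S\ref{popped}): reduce to a Zariski-open $U$ by properness of the projectivized determinantal locus over $V$, then bound $\codim_V H$ below by the rank of the map $^t\bar\nabla\delta^{\frac{\mov}{2},\frac{\mov}{2}}_{t_0}$, which Serre duality identifies with the image of $\delta_{t_0}$ under \eqref{IVHS-2014}. The only difference is that where the paper simply cites Voisin (\cite{vo03}, \S 5.3.3) for the fact that the first-order behaviour of the Hodge locus is governed by $^t\bar\nabla$, you rederive it from scratch via the normal function $NF_\delta=\delta\bmod F^{\mov/2}$, flatness of $\delta$, and Griffiths' identification of the graded Gauss--Manin connection with cup product against Kodaira--Spencer classes; this makes the writeup self-contained but is not a new route. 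It is genuinely different from the proof the paper features as its main contribution, which works on the enlarged space $\T=V\times\A_\C^{\hn^{\frac{\mov}{2}}}$ with the modular foliation $\F$ and the Hodge locus with constant periods: that argument is purely algebraic (no local-analytic normal function), exhibits the Hodge-locus components as leaves whose codimension is read off from the determinantal strata $\sinone_y$, and in addition records which Hodge blocks of the Gauss--Manin connection (e.g.\ $\gma^{\frac{\mov}{2},\frac{\mov}{2}}$) are invisible to IVHS --- information your local computation, which only sees the first graded piece of $\nabla$, does not retain. You gesture at this packaging in your final sentence but do not carry it out.
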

The above theorem is a direct consequence  of Voisin's results on  the Zariski tangent
space of the components of the Hodge locus, see \cite{vo03} Lamma 5.16, p. 146. For a proof see \S\ref{popped}. Stated in this  format it becomes directly related to a weaker version of 
Harris-Voisin's conjecture, and it seems to me that its power and importance has been neglected in the literature, see 
Theorem \ref{maintheo2} and Theorem \ref{maintheo3} below. In this article we give a new proof of Theorem \ref{maintheo} which is 
based on the construction of a larger parameter space $\T$, a modular foliation in $\T$ and the notion of Hodge locus
with constant periods in $\T$. Our proof is purely algebraic, whereas Voisin's proof is based on local 
analytic
study of the Hodge locus. This might open up a new point of view to the Cattani-Deligne-Kaplan 
theorem discussed
at the beginning of the present paper. An advantage of our proof is that it says which part of the Gauss-Manin connection is absent
in IVHS and it  is needed for a full solution of the Harris-Voisin conjecture. Another advantage 
is that it gives a precise
description of the polynomial equations for the periods of Hodge classes, 
see for instance the end of \S\ref{geryekardam}. 

Theorem  \ref{maintheo} for $s=0$ is the  classical Noether's theorem.
It says that if \eqref{IVHS-2014} is injective (or equivalently if the map \eqref{IVHS-sept} 
is surjective) then the components of the Hodge locus
in $V$ are proper analytic subsets of $V$. Since we know that the set of such 
components is enumerable, we conclude that for a generic $Y_t$, the $\mov$-dimensional
Hodge classes are complete intersection of $Y_t$ with another variety in the ambient projective space and hence are algebraic, 
see \cite{CGGH1983} page 71 and \cite{Harris85} page 56.  From now on we assume that \eqref{IVHS-2014}
is injective and so we can take the induced map after projectivization.  
In the case of hypersurfaces, IVHS 
can be computed explicitly and
a simple analysis of the hypothesis of Theorem \ref{maintheo} for the Fermat variety gives us:
\begin{theo}
\label{maintheo2}
 Let $V$ be the parameter space of smooth hypersurfaces  of degree $d$ in $\Pn {\mov+1}$ and let
 $0\in V$ be the parameter of  the Fermat variety. Assume that $d\geq 2+\frac{4}{\mov}$.
 There is a Zariski open neighborhood $U$ of $0\in V$ 
 such that 
 all the components of the Hodge locus $\HL_\mov(Y/V)$ intersecting $U$ have codimension $\geq  \bn{\frac{\mov}{2}+d}{d}-(\frac{\mov}{2}+1)^2$.
 The lower bound is obtained by the locus $H$ of hypersurfaces
 containing a linear projective space $\Pn {\frac{\mov}{2}}\subset\Pn {\mov+1}$. 
\end{theo}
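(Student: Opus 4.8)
The plan is to specialise Theorem~\ref{maintheo} to hypersurfaces, where the IVHS is multiplication in the Jacobian ring, and to verify its hypothesis at the Fermat point with the largest possible $s$.

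\emph{Step 1: translation to the Jacobian ring.} For $[F]\in V$ write $R=R^F=\C[x_0,\dots,x_{\mov+1}]/J_F$ for the Jacobian ring and $\sigma=(\mov+2)(d-2)$ for its socle degree. By the classical description of the IVHS of a smooth hypersurface (Griffiths, Carlson--Griffiths, Donagi), for $k=\frac{\mov}{2}+1$ there are canonical identifications $H^1(Y_t,T_{Y_t})_\theta\cong R_d$ and $R_{\sigma/2-d}$, $R_{\sigma/2}$ for the primitive parts of $H^{\mov-k}(Y_t,\Omega^{k}_{Y_t})$ and $H^{\mov-k+1}(Y_t,\Omega^{k-1}_{Y_t})$, under which \eqref{IVHS-sept} is the multiplication map $R_d\times R_{\sigma/2-d}\to R_{\sigma/2}$. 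By Macaulay duality $R_a^{*}\cong R_{\sigma-a}$, the derived map \eqref{IVHS-2014} becomes
\[
R_{\sigma/2}\ \longrightarrow\ {\rm Hom}(R_d,R_{\sigma/2+d}),\qquad w\longmapsto m_w,
\]
where $m_w$ is multiplication by $w$. The hypothesis $d\geq 2+\frac{4}{\mov}$ is exactly $2d\leq\sigma$, i.e.\ $d\leq\sigma/2$ and $\sigma/2+d\leq\sigma$; this makes $R_{\sigma/2+d}$ nonzero and, since $R$ is standard graded, $w\mapsto m_w$ injective, so the running hypotheses after Theorem~\ref{maintheo} hold at $[F]$. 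Applying Theorem~\ref{maintheo} at the Fermat point $0$ with $s=\bn{\frac{\mov}{2}+d}{d}-(\frac{\mov}{2}+1)^2-1$, it suffices to prove the following claim and then to exhibit a component of $\HL_\mov(Y/V)$ of this exact codimension.

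\emph{Claim.} For $F_0=\sum_i x_i^d$, every $0\neq w\in R^{F_0}_{\sigma/2}$ satisfies $\rank(m_w)\geq\bn{\frac{\mov}{2}+d}{d}-(\frac{\mov}{2}+1)^2$.

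\emph{Step 2: proof of the Claim.} Here $R^{F_0}=\C[x_0,\dots,x_{\mov+1}]/(x_0^{d-1},\dots,x_{\mov+1}^{d-1})$, with monomial basis $\{x^a:0\leq a_i\leq d-2\}$. Fix a monomial order and let $x^\beta$ be the smallest monomial occurring in $w$. For each basis monomial $x^a$ of $R_d$ with $m_{x^\beta}(x^a)\neq0$, the element $m_w(x^a)=wx^a$ is nonzero and its smallest monomial is $x^{a+\beta}$; since $a\mapsto a+\beta$ is injective these elements are linearly independent, so $\rank(m_w)\geq\rank(m_{x^\beta})$. It remains to minimise $\rank(m_{x^\beta})$ over monomials $x^\beta\in R^{F_0}_{\sigma/2}$. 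Setting $c_i=d-2-\beta_i\in\{0,\dots,d-2\}$ gives $\sum c_i=\sigma/2$ and $\rank(m_{x^\beta})=\#\{a\in\N_0^{\mov+2}:\sum a_i=d,\ a_i\leq c_i\ \forall i\}$. As each $c_i\leq d-2$ and $\sum c_i=(\frac{\mov}{2}+1)(d-2)$, at least $\frac{\mov}{2}+1$ of the $c_i$ are positive, and an elementary combinatorial minimisation — this is where $d\leq\sigma/2$ is used — shows that this lattice-point count is smallest for the maximally concentrated $c$, namely $c_i=d-2$ for exactly $\frac{\mov}{2}+1$ indices and $c_i=0$ otherwise, for which it equals $\bn{\frac{\mov}{2}+d}{d}-(\frac{\mov}{2}+1)^2$ (the subtracted term counting degree-$d$ monomials in $\frac{\mov}{2}+1$ variables with some exponent $\geq d-1$). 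This proves the Claim, and with it the codimension bound for components of $\HL_\mov(Y/V)$ meeting a suitable Zariski open $U\ni 0$. Since the Jacobian ring of a smooth hypersurface is a complete intersection of fixed Hilbert function, the Claim is a Zariski open condition on $[F]$, so $U$ may be chosen so that it holds at every $t\in U$.

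\emph{Step 3: the bound is attained.} Let $H$ be the closure in $V$ of the locus of smooth degree-$d$ hypersurfaces containing some linear $\Pn{\frac{\mov}{2}}\subset\Pn{\mov+1}$. A standard dimension count on the incidence variety $\{([F],[L]):L\cong\Pn{\frac{\mov}{2}},\ L\subset\{F=0\}\}\subset V\times{\rm Gr}(\frac{\mov}{2}+1,\mov+2)$, together with the classical fact that a general member of $H$ contains only finitely many such $L$, gives $\codim_V H=\bn{\frac{\mov}{2}+d}{d}-(\frac{\mov}{2}+1)^2$. The Fermat variety contains such linear spaces — pair up the $\mov+2$ coordinates and impose $x_{2j}=\zeta_j x_{2j+1}$ with $\zeta_j^d=-1$ — so $0\in H$ and $H$ meets $U$. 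For general $t\in H\cap U$ the class $\delta_t=[L_t]$ is an algebraic Hodge class in $H^{\mov}(Y_t,\Q)$ with nonzero primitive part; it lies on a component $H'$ of $\HL_\mov(Y/V)$ containing $H$, and by Voisin's tangent space estimate (the input to Theorem~\ref{maintheo}) together with Step~2 applied at $t\in U$, $\codim_V H'\geq\rank(m_{(\delta_t)_\prim})\geq\bn{\frac{\mov}{2}+d}{d}-(\frac{\mov}{2}+1)^2=\codim_V H$, whence $H'=H$. Thus $H$ is a component of $\HL_\mov(Y/V)$ meeting $U$ whose codimension equals the lower bound, completing the proof.

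\emph{Expected main obstacle.} The crux is the combinatorial minimisation in Step~2: one must prove rigorously that among all capacity vectors $c$ with $c_i\leq d-2$ and $\sum c_i=\sigma/2$, the count $\#\{a:\sum a_i=d,\ a_i\leq c_i\}$ is minimised by the maximally concentrated $c$, and it is precisely here that the numerical hypothesis $d\geq2+\frac{4}{\mov}$ (equivalently $d\leq\sigma/2$) enters. The Jacobian-ring translation in Step~1, the reduction from an arbitrary $w$ to a monomial via leading terms, and the dimension count for $H$ are routine; one could also bypass the appeal to Step~2 at general $t$ by computing the primitive part of the cycle class $[L_t]$ in $R^{F_t}_{\sigma/2}$ explicitly, but that is not needed.
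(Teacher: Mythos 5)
Your proposal follows essentially the same route as the paper: apply Theorem \ref{maintheo} at the Fermat point, identify the IVHS with multiplication in the Jacobian ring $R=\C[x_0,\dots,x_{\mov+1}]/(x_0^{d-1},\dots,x_{\mov+1}^{d-1})$ so that \eqref{IVHS-2014} becomes $w\mapsto m_w$ on graded pieces (this is exactly the matrix $M=[x_{i+j}]$ of \S\ref{geryekardam}), bound $\rank(m_w)$ from below by the rank of multiplication by the extremal monomial of $w$ via an additive monomial order, and reduce everything to minimising the lattice-point count $\#\{a\in I_d:\ a\le c\}$ over capacity vectors $c$ with $\sum_i c_i=\sigma/2$. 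Your smallest-monomial argument is a clean repackaging of the paper's Proposition \ref{yademadaram}, which runs a downward induction on an additive order and exhibits a triangular minor equal to a power of $x_k$; the two are equivalent. Your Step 3 is in fact more complete than what the paper writes: the paper only exhibits the period vector of a $\Pn{\frac{\mov}{2}}$ inside the Fermat variety as a point where $\rank M$ attains the bound, whereas you close the loop with the incidence-variety dimension count for $H$ and the sandwich $H\subseteq H'$, $\codim_V H'\ge N=\codim_V H$.

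The one genuine gap is the step you flag yourself: the claim that the count $\#\{a:\sum a_i=d,\ a_i\le c_i\}$ is minimised by the maximally concentrated $c$ is asserted, not proved, and it is the entire content of the second (unlabelled) proposition in the paper's proof --- without it the numerical value $\bn{\frac{\mov}{2}+d}{d}-(\frac{\mov}{2}+1)^2$ is not established. The paper closes it with an explicit injection $A_{(k_0-1,k_1+1,\dots)}\hookrightarrow A_{(k_0,k_1,\dots)}$ for $0<k_0<k_1<d-2$, sending $\bigl((i_0,i_1,\dots),(j_0,j_1,\dots)\bigr)$ to $\bigl((i_0+1,i_1-1,\dots),j\bigr)$ when $i_1\neq 0$ and to $\bigl((0,i_0,\dots),(k_0,k_1-i_0,\dots)\bigr)$ when $i_1=0$; iterating this moves any $k$ to one with all entries in $\{0,d-2\}$ without increasing the count, and the value there is computed directly. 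Supplying this injection (or an equivalent monotonicity argument) is all that separates your proposal from a complete proof; the Jacobian-ring translation, the rank reduction, and the attainment step are all sound and match the paper.
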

We usually call $0\in V$ the Fermat point. 
The above theorem  for $\mov=2$ and $U=V$ was conjectured in \cite{CGGH1983}, I. 
It was independently proved by 
Green in \cite{green1988, green1989} and Voisin in \cite{voisin1991}. 
In this case it is also proved that
$H$ is the only component of codimension $d-3$. 
Note that for $\mov\geq 4$  the Hodge conjecture 
is not known and Theorem \ref{maintheo2} is independent of this. 
We may analyze  IVHS for other single points in the parameter
space of hypersurfaces or complete intersections and 
get further results similar to Theorem \ref{maintheo2}. 
One can also take the bundle of IVHS in \eqref{IVHS-2014} and try to compute
its first order approximation in single points. We do this around the Fermat point
and we get the following. Let 
\begin{equation}
\label{21oct2014}
I_N:=\left \{ (i_0,i_1,\ldots,i_{\mov+1})\in \Z^{\mov+2}\mid 0\leq i_e\leq d-2, \ \ i_0+i_1+\cdots+i_{\mov+1}=N\right\}
\end{equation}
for $N=0,1,2,\ldots, (d-2)(\mov+2)$ and consider independent variables 
$x_i$ indexed by $i\in I_{(\frac{\mov}{2}+1)d-\mov-2}$. 
For any other $i$ which is not in the set 
$I_{(\frac{\mov}{2}+1)d-\mov-2}$, $x_i$ by definition is zero.   
Let $M:=[x_{i+j}]$ be a matrix whose rows and columns
are indexed by $i\in I_{\frac{\mov}{2}d-\mov-2}$ and $j\in I_d$, respectively, and
in its $(i,j)$ entry we have $x_{i+j}$. The matrix $M$ is obtained
by IVHS for the Fermat point.
For $j, \alpha\in I_d$ and $i\in I_{\frac{\mov}{2}d-\mov-2}$ 
such that for a unique $0\leq \check e\leq \mov+1$ we have $i_{\check e}+j_{\check e}\geq d-1$, let us define
$$
i+_\alpha j=i+j+\alpha-(0,\cdots,0,\overbrace{d}^{\check e\text{-th place}},0,\cdots,0).
$$
For other pairs of $(i,j)$ let $i+_\alpha j=0\in\Z^{\mov+2}$ (it can be any element outside $I_{(\frac{\mov}{2}+1)d-\mov-2}$). 
We define the matrix
$\check M_\alpha$ in the following way. 
For $(i,j)$ as above and in the first case, the $(i,j)$ entry of $\check M_\alpha$ is
$\alpha_{\check e}\cdot x_{i+_\alpha j }$, and elsewhere entries  are zero. 
The matrix $N_{j,\alpha}$ is  obtained
by replacing the $j$-th column of $M$ with the $j$-th column of $\check M_\alpha$. 
We define the homogeneous ideal ${\cal I}_s^1\subset \C[x],\ \ s=0,1,2,\ldots$ to 
be generated by 
\begin{eqnarray}
\label{23oct2014-1}
 {\rm minor}_{s+1} (M), & & \\ \label{23oct2014-2}
 \sum_{j\in I_d} {\rm minor}_{s+1}( N_{j,\alpha}), & & \alpha\in I_d,  
\end{eqnarray}
where "minor" runs through all minors of 
$(s+1)\times (s+1)$ submatrices of a matrix. Note that once we fix
a block of $(s+1)\times (s+1)$ matrix for making a determinant, it is the same for
all the matrices $N_{j,\alpha}$ in the sum \eqref{23oct2014-2}. 
Let also ${\cal I}_s^0\subset \C[x]$ be the homogeneous ideal generated by \eqref{23oct2014-1}.
This is the ideal of $(s+1)\times (s+1)$ minors of $M$. 
We define $s_{\rm max}^i,\ \ i=0,1$ to be the maximum $s$ such that 
${\rm Zero}({\cal I}_s^i)=\{0\}$. 
The proof of Theorem \ref{maintheo}
is reduced to check the  equality 
\begin{equation}
\label{mytheory}
s_{\rm max}^0=
\bn{\frac{\mov}{2}+d}{d}-(\frac{\mov}{2}+1)^2-1,
\end{equation}
that is, 
if for some $x_i$'s 
the rank of $M$ is $\leq s_{\rm max}^0$
then all $x_i$'s are zero, see Proposition \ref{yademadaram}.
\begin{theo}
\label{maintheo3}
There is a Zariski open subset $U$ of the parameter space $V$ of smooth 
hypersurfaces in $\Pn {\mov+1}$ such that 
all the components of the Hodge locus 
$\HL_\mov(Y/V)$ intersecting $U$ have codimension 
$\geq s_{\rm max}^1+1$. 
\end{theo}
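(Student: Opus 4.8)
\medskip\noindent\textbf{Proof strategy.}
The plan is to deduce Theorem~\ref{maintheo3} from Theorem~\ref{maintheo} applied not at the Fermat point $0$ but at a \emph{generic} $t\in V$, the first order jet of the bundle of maps \eqref{IVHS-2014} along $V$ --- encoded at $0$ by the matrices $\check M_\alpha$ and $N_{j,\alpha}$ of \eqref{23oct2014-2} --- being precisely what certifies that the hypothesis of Theorem~\ref{maintheo} holds there with $s=s_{\rm max}^1$. (Note ${\cal I}_s^0\subset{\cal I}_s^1$, hence $s_{\rm max}^1\ge s_{\rm max}^0$.) Concretely, let $Z_s$ be the incidence variety over $V$ whose fibre over $t$ consists of the $[\xi]$ for which the homomorphism attached to $\xi$ by \eqref{IVHS-2014} at $t$ has rank $\le s$; cut out fibrewise by the linear (``lies in the image'') and determinantal (``rank $\le s$'') conditions, $Z_s$ is closed in a projective bundle over $V$, hence proper over $V$, so its image $B_s\subset V$ --- the locus where the hypothesis of Theorem~\ref{maintheo} fails --- is Zariski closed. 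By Theorem~\ref{maintheo}, every $t\notin B_{s_{\rm max}^1}$ has a Zariski open neighbourhood $U_t$ on which all components of $\HL_\mov(Y/V)$ have codimension $\ge s_{\rm max}^1+1$, so $U:=\bigcup_{t\notin B_{s_{\rm max}^1}}U_t$ is the required open set as soon as it is nonempty; since $V$ is irreducible it therefore suffices to prove $B_s\ne V$ for $s=s_{\rm max}^1$, i.e. that $Z_s$ does not dominate $V$.

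\smallskip\noindent Suppose it does. Being proper over $V$, some component $Z_s^0$ of $Z_s$ surjects onto $V$. Restricting $Z_s^0$ over a generic line $\ell\ni 0$ with tangent direction $W\in H^1(Y_0,T_{Y_0})_\theta$, passing to a component surjecting onto $\ell$, normalizing, and choosing a point over $0$, one obtains an analytic arc $u\mapsto(t(u),[\xi(u)])$ with $t(0)=0$, $\xi(0)=\xi_0\ne 0$, and $\mathrm{rank}\,M_{t(u)}(\xi(u))\le s$ for all $u$, where $M_0=M=[x_{i+j}]$ is the Fermat matrix and the first order part of $M_t$ at $0$ is the correction that produces $\check M_\alpha$ and, after replacing one column, $N_{j,\alpha}$. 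Expanding each $(s+1)\times(s+1)$ minor of $M_{t(u)}(\xi(u))$ in powers of $u$ and using that it vanishes identically: the lowest order gives $\xi_0\in{\rm Zero}({\cal I}_s^0)$, i.e. \eqref{23oct2014-1} at $\xi_0$; and the next order --- here one uses that the derivative of a fixed $(s+1)\times(s+1)$ minor of $M$ along the perturbation $\check M_\alpha$ equals, modulo the now-vanishing minors \eqref{23oct2014-1} and with that same block reused for every summand, precisely the generator \eqref{23oct2014-2} --- forces, as $\ell$ (hence $W$) varies over all generic directions, that \eqref{23oct2014-2} vanishes at $\xi_0$ for every $\alpha\in I_d$ as well. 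Hence $\xi_0\in{\rm Zero}({\cal I}_s^1)$, contradicting ${\rm Zero}({\cal I}_{s_{\rm max}^1}^1)=\{0\}$ and $\xi_0\ne 0$. So $B_{s_{\rm max}^1}\ne V$, and with the first paragraph the theorem follows.

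\smallskip\noindent The weight of the argument lies in this first order step, i.e. in the passage from ``$\mathrm{rank}\,M_{t(u)}(\xi(u))\le s$ along the arc'' to ``$\xi_0\in{\rm Zero}({\cal I}_s^1)$''. Because the Fermat point is far from generic in $V$, generic smoothness of $Z_s^0\to V$ cannot be invoked over it; moreover the arc's derivative $\xi'(0)$ feeds a term $M(\xi'(0))$ into the first order identity which is a priori not a combination of the $\check M_\alpha$-contributions. One must show, using the explicit multiplicative structure of the Fermat Jacobian ring $\C[x]/(x_0^{d-1},\dots,x_{\mov+1}^{d-1})$ --- and, in the framework of the present paper, the modular foliation on the enlarged parameter space $\T$ together with the Hodge locus with constant periods there --- that a generic direction $W$ moves \emph{every} rank-$s$ point of ${\rm Zero}({\cal I}_s^0)$ off the stratum $D_{s,t}$, so that this extra term cannot absorb the contribution of \eqref{23oct2014-2}. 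This is exactly the portion of the Gauss--Manin connection that is invisible to IVHS at a single point, which is why the bound one gets is $s_{\rm max}^1+1$; the analogous reduction keeping only the $0$-th jet is carried out in Proposition~\ref{yademadaram} and gives \eqref{mytheory}.
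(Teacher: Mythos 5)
Your overall route is the paper's own: properness of the (projectivized) rank~$\le s$ incidence variety over $V$ makes its image Zariski closed, so the theorem reduces to showing that for $s=s_{\rm max}^1$ this variety --- the paper's $\sinone_{c-s}$ --- does not dominate $V$, and this non-dominance is to be read off from the first-order Taylor expansion of the IVHS matrices at the Fermat point, which is exactly what produces $\check M_\alpha$, $N_{j,\alpha}$ and the ideal ${\cal I}_s^1$. The reduction in your first paragraph and the identification of the relevant first-order data agree with \S\ref{tanhayetanha}.

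The problem is that your second paragraph does not actually carry out the first-order step, and your closing paragraph concedes as much (``one must show\dots''). Two concrete issues. First, in the arc formulation the base point $\xi_0=\xi(0)$ depends on the chosen line $\ell$ (the fibre of $Z_s^0$ over $0$ may be positive-dimensional), so letting $W$ range over generic directions does not yield the vanishing of the generator \eqref{23oct2014-2} for \emph{every} $\alpha$ at a \emph{single} $\xi_0$; you get at best one linear combination vanishing at a direction-dependent point, which is not membership in ${\rm Zero}({\cal I}_s^1)$. The paper avoids this quantifier problem by fixing a point $(0,x)$ of the fibre and computing the image of the entire Zariski tangent space of $\sinone_{c-s}$ there under $d\pi$, concluding non-surjectivity of $d\pi$ at every such point. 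Second, the term you flag --- the fibre-direction contribution $\sum_i \partial_{x_i}\left({\rm minor}_{s+1}(M)\right)\xi_i'(0)$ --- is a genuine obstruction: it vanishes automatically only when ${\rm rank}\,M(\xi_0)\le s-1$, not when the rank is exactly $s$, and when nonzero it can compensate the $N_{j,\alpha}$-contribution and restore surjectivity of $d\pi$ (equivalently, let the minor identity hold at first order without \eqref{23oct2014-2} vanishing). Controlling this term is precisely the lemma you defer, and it is the step on which the theorem rests; the paper's own write-up asserts the containment of $d\pi$ of the tangent space in the hyperplane defined by \eqref{23oct2014-2} without displaying this term, so closing it is where the real work lies. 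Without it, your argument only recovers the zeroth-order bound $s_{\rm max}^0+1$ of Proposition \ref{yademadaram} and \eqref{mytheory}. As written, the proposal is therefore a correct reduction together with an honest statement of the missing (and essential) step, not a proof.
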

We were not able to compute $s_{\rm max}^1$ neither by hand nor
by computer. Some methods using both theoretical and computational 
aspects of ideals and their Gr\"obner basis seems to be necessary for computing $s_{\rm max}^1$.  

\subsection{Harris-Voisin conjecture}
In this section we assume that \eqref{IVHS-2014} is injective and hence, Noether's theorem
is valid for $Y\to V$.  
Let
\begin{eqnarray}
 \label{contradiction90}
a &:=&\dim H^{\frac{\mov}{2}-1}(Y_t,\Omega_{Y_t}^{ \frac{\mov}{2}+1})= 
\# I_{\frac{\mov}{2}d-\mov-2}, \\ \nonumber
b &:=& \dim H^{\frac{\mov}{2} }(Y_t,\Omega_{Y_t}^{\frac{\mov}{2}})= \# I_{(\frac{\mov}{2}+1)(d-2)},\\ \nonumber
r&:=&\dim  H^1(Y_t,T_{Y_t})_\theta= \# I_{d},\\ \nonumber
c &:=& \text{ the maximum rank of the image of \eqref{IVHS-2014} for generic $t$}
\end{eqnarray}
where $I_N$ is defined in \eqref{21oct2014}. 
The first equalities/definitions are for a general smooth 
projective variety $Y_t$ and the second equalities are for smooth hypersurfaces of degree $d$ in $\Pn {\mov+1}$. 
By a theorem of Voisin (see Proposition 5.14 \cite{vo03}) the codimension of the 
components of $\HL_\mov(Y/V)$ is $\leq a$.
The main challenge in front of us
is to find the maximum value of $s$  for a fixed or a  generic $t\in V$ such that the hypothesis of Theorem 
\ref{maintheo} is true.  The vector spaces in \eqref{IVHS-2014} are fibers of algebraic bundles over $V$. 
Let 
\begin{equation}
\label{IVHS-bundle}
{\mathcal H}^{b, *}\to  {\rm Hom} \left ({\mathcal H}^r, {\mathcal H}^{a,*}\right )
\end{equation}
be the bundle homomorphism 
obtained from \eqref{IVHS-2014}. We denote by $\sinone_{c-s}\subset {\mathcal H}^{b, *}$ 
the determinantal variety of homomorphisms of rank $\leq s$ of 
\eqref{IVHS-bundle} (the reason for this index notation is explained in \S\ref{hanoozhich}). 
It is the pull-back of the determinantal variety $D_s$ in ${\rm Hom} \left ({\mathcal H}^r, {\mathcal H}^{a,*}\right )$ by the map
\eqref{IVHS-bundle}. Two important numbers in our study are
$$
s_{\rm max}:= \text{ the maximum $s$ such that the projection 
$\sinone_{c-s}\to V$ is not dominant,} 
$$
\begin{equation}
\label{30sept2014}
\check s_{\rm max}:=a- \left \lceil \sqrt{ \left(\frac{r-a}{2}\right)^2+b}\ \ -\ \ \frac{r-a}{2}\right  \rceil,
\end{equation}
where for $x\in\R$
we define $\lceil x\rceil$ the unique
integer with $\lceil x\rceil-1< x \leq \lceil x\rceil$.
We have 
\begin{equation}
s_{\rm max}^0\leq 
s_{\rm max}^1 \leq 
s_{\rm max}\leq 
\check  s_{\rm max}.
\end{equation}
\begin{conj}
\label{paperyounes}
If the Kodaira-Spencer map \eqref{arezoyematoyito} is surjective then for a generic $t\in V$, the map \eqref{IVHS-2014} 
is transversal to the determinantal variety $D_{\check s_{\rm max},t }$ 
of homomorphisms of rank $\leq \check s_{\rm max}$ (and hence  does not intersect it).
\end{conj}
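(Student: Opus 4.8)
The plan is to first convert the statement into a disjointness assertion, then reduce it to a single member of the family, and finally, in the hypersurface case, translate everything into the Jacobian ring, where the content becomes a maximal-rank statement about multiplication maps.

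\emph{From transversality to disjointness.} Fix $t$ and abbreviate $a,b,r$ as in \eqref{contradiction90}; with $k=\frac\mov2+1$ regard \eqref{IVHS-2014} as a \emph{linear} map $\phi_t\colon\C^b\to \Mat$, where $\Mat$ denotes the $ar$-dimensional space of $a\times r$ matrices. For $s<\min(a,r)$ the rank $\le s$ locus $D_{s,t}\subset\Mat$ has codimension $(a-s)(r-s)$, and at a point of rank exactly $s'\le s$ the tangent space to the stratum $\{\rank=s'\}$ has codimension $(a-s')(r-s')$. By the definition \eqref{30sept2014} of $\check s_{\rm max}$ one has $(a-s')(r-s')\ge(a-\check s_{\rm max})(r-\check s_{\rm max})\ge b$ for every $s'\le\check s_{\rm max}$. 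Hence, if $\phi_t$ is transversal to $D_{\check s_{\rm max},t}$, i.e. to each of its rank strata, then every $\Im\phi_t\cap\{\rank=s'\}$ has dimension $\le b-(a-s')(r-s')\le0$, so $\Im\phi_t\cap D_{\check s_{\rm max},t}$, being the closed union of these strata, is a cone of dimension $\le0$ and therefore equals $\{0\}$; the converse implication is trivial. Thus the conjecture is equivalent to the statement that for generic $t$ every nonzero $v\in H^{\frac\mov2}(Y_t,\Omega_{Y_t}^{\frac\mov2})^{*}$ satisfies $\rank\phi_t(v)\ge\check s_{\rm max}+1$ -- equivalently, that the $b$-plane $\Im\phi_t$ meets the rank stratification exactly as a general $b$-plane does. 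This is the ``and hence'' clause of the conjecture made precise, and is the form I would try to establish.

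\emph{Reduction to one member.} The locus $Z=\{(t,[v])\in\mathbb P({\mathcal H}^{b,*})\mid\rank\phi_t(v)\le\check s_{\rm max}\}$ is Zariski closed, since rank is upper semicontinuous along the algebraic bundle map \eqref{IVHS-bundle}; the projection $\pi\colon Z\to V$ is proper and $V$ is irreducible, so $\pi(Z)$ is Zariski closed in $V$. Therefore $\pi(Z)\ne V$ -- that is, the disjointness above holds for all $t$ in a Zariski-dense open subset of $V$ -- as soon as there exists a \emph{single} $t_0\in V$ with $\pi^{-1}(t_0)=\varnothing$, i.e. a single member $Y_{t_0}$ for which every nonzero vector maps under $\phi_{t_0}$ to a matrix of rank $>\check s_{\rm max}$. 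So it suffices to exhibit one such member.

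\emph{The hypersurface case and the main obstacle.} Take $V$ to be the space of smooth degree-$d$ hypersurfaces in $\Pn{\mov+1}$ and let $R=R_f$ be the Jacobian ring of the defining form, an Artinian Gorenstein graded algebra with socle in degree $\sigma_0=(d-2)(\mov+2)$. Under the Griffiths isomorphisms on primitive cohomology -- $H^1(Y,T_Y)_\theta\cong R_d$, $H^{\frac\mov2-1}(Y,\Omega^{\frac\mov2+1}_Y)\cong R_e$, $H^{\frac\mov2}(Y,\Omega^{\frac\mov2}_Y)\cong R_{e+d}$ with $e=\frac\mov2 d-\mov-2$ and $e+d=\sigma_0/2$ -- together with Macaulay duality $R_N^{*}\cong R_{\sigma_0-N}$, the pairing \eqref{IVHS-sept} becomes the multiplication $R_d\times R_e\to R_{e+d}$, and \eqref{IVHS-2014} becomes the map $w\mapsto(\xi\mapsto w\xi)$ from $R_{\sigma_0/2}$ to ${\rm Hom}(R_d,R_{\sigma_0/2+d})$; explicitly $\rank\phi_t(w)=\rank\bigl(R_d\xrightarrow{\,\cdot w\,}R_{\sigma_0/2+d}\bigr)$, a map between spaces of dimensions $r$ and $a$. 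By the two previous steps the conjecture for hypersurfaces reduces to: for a generic form $f$, no nonzero $w\in R_{\sigma_0/2}$ multiplies $R_d$ into a subspace of $R_{\sigma_0/2+d}$ of dimension $\le\check s_{\rm max}$; equivalently, the image of the multiplication map $R_{\sigma_0/2}\to{\rm Hom}(R_d,R_{\sigma_0/2+d})$ is as generic, with respect to the rank stratification, as the dimension count allows. I would pursue this on two fronts. First, degenerate $f$ to the Fermat form, where $R$ has a monomial basis and $\phi$ is exactly the combinatorial matrix $M=[x_{i+j}]$ introduced above; the present paper already gives the exact threshold there, $s_{\rm max}^{0}=\bn{\frac\mov2+d}{d}-(\frac\mov2+1)^2-1$ (see \eqref{mytheory}), but this is in general only a \emph{lower} bound for $\check s_{\rm max}$, so one must feed in the first-order corrections $N_{j,\alpha}$ (yielding $s_{\rm max}^1$, Theorem~\ref{maintheo3}) and, beyond that, higher jets along deformations of the Fermat point, and show that the resulting refined determinantal ideals still cut out $\{0\}$ up to level $\check s_{\rm max}$. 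Second, work at a truly generic $f$ and invoke -- or prove -- a strong-Lefschetz/maximal-rank property of $R_f$ at the middle degree. The hard part is exactly this second input: it is neither the classical Weak/Strong Lefschetz Property, which concerns multiplication by a power of a general \emph{linear} form, nor is it supplied by the symmetrizer lemma of Donagi, which recovers $R_d$ from the IVHS but says nothing about ranks; what is required is a maximal-rank statement for \emph{all} multiplication maps by elements of a fixed non-linear degree, a question in the circle of Fr\"oberg's conjecture and the Maximal Rank Conjecture, and open in general. I therefore expect a uniform proof to be comparably hard, and the realistic program to parallel Theorems~\ref{maintheo2}--\ref{maintheo3}: establish Theorem~\ref{maintheo3} (replacing $\check s_{\rm max}$ by $s_{\rm max}^1$) unconditionally by carrying out the first-order Fermat computation, and close the residual gap $s_{\rm max}^1<\check s_{\rm max}$ -- by further Gr\"obner-basis computation, or by an applicable Lefschetz-type theorem (e.g. for $\mov=2$, or for small $d$) -- only in restricted ranges of $(\mov,d)$. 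For a general family with surjective Kodaira-Spencer map there is no distinguished ``generic member'' to specialise to, and there the conjecture is best read as a genericity principle for the period map which, as far as I can see, must be checked family by family.
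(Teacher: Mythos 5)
The statement you were asked to prove is stated in the paper as Conjecture~\ref{paperyounes}, and the paper contains no proof of it: its only discussion, in \S\ref{conjsection}, is the dimension count showing that the codimension of $D_{s,t}$ in ${\rm Hom}$ is $(a-s)(r-s)$, so that transversality would force empty intersection precisely when $b\leq(a-s)(r-s)$, the largest such $s$ being $\check s_{\rm max}$ of \eqref{30sept2014}. Your first step reproduces exactly this heuristic (correctly, including the stratum-by-stratum reading of transversality and the cone argument giving $\Im\phi_t\cap D_{\check s_{\rm max},t}=\{0\}$), your reduction to a single member via properness of the projectivized determinantal locus is the same semicontinuity argument the paper itself uses in the proof of Theorem~\ref{maintheo}, and your Jacobian-ring translation agrees with \eqref{08sep2014} and the identifications \eqref{12oct2014-1}--\eqref{12oct2014-2}. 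So there is nothing to fault in what you establish, and you are right not to claim a proof: the residual content you isolate --- a maximal-rank statement for all multiplication maps $R_d\xrightarrow{\cdot w}R_{\sigma_0/2+d}$, $w\neq 0$ in middle degree, for generic $f$ --- is genuinely open, and the paper concedes as much by working only with the chain $s^0_{\rm max}\leq s^1_{\rm max}\leq s_{\rm max}\leq\check s_{\rm max}$, by proving only the Fermat bound $s^0_{\rm max}$ (Theorem~\ref{maintheo2}) and its first-order refinement $s^1_{\rm max}$ (Theorem~\ref{maintheo3}), and by stating that it could not even compute $s^1_{\rm max}$. Your diagnosis that the Fermat point cannot serve as the ``single good member'' because it only certifies $s^0_{\rm max}$, which may be strictly smaller than $\check s_{\rm max}$, is the correct reading of why the paper resorts to jets along deformations of Fermat rather than to a specialization argument. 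In short: your proposal matches the paper's own (non-)proof where the paper has one, extends it with correct reductions, and honestly marks the conjecture as open at exactly the point where it is open.
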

If this conjecture is true then $s_{\rm max}=\check s_{\rm max}$.
In order to explain the content of  Conjecture \ref{paperyounes} we 
consider the following case.
Let $V$ be the parameter space of smooth complex surfaces  of degree $d$ in $\Pn 3$ and let $Y/V$ be the corresponding 
family.  
For $t\in V$ let 
$f_t=f_t(X_0,X_1,X_2,X_{3})$ be 
the corresponding homogeneous 
polynomial and $Y_t:=\Pn {}\{f_t=0\}$. The map in \eqref{IVHS-sept} for $\mov=k=2$ 
is identified  by the multiplication 
of polynomials:
\begin{equation}
\label{20sep2014-1}
(\C[X]/J)_d\times (\C[X]/J)_{d-4}\to (\C[X]/J)_{2d-4},
\end{equation}
 where $J:= \jacob(f_t)$ is the Jacobian ideal of $f_t$.
In this case 
the Hodge locus $\NLL_d:=\HL_2(Y/V)$ is known as the Noether-Lefschetz locus and we have 
a good understanding of it, see \S\ref{NLlocus} for a review of some results. The components of $\NLL_d$ 
have codimension $\leq \bn{d-1}{3}$ and  Ciliberto, Harris and Miranda in \cite{CHM88} have constructed  infinite number of  components 
of codimension $a$ whose union is dense in $V$ with both usual and Zariski topology. 
A component of this codimension  is called a general component and all others are called special. Joe Harris conjectured that  there must be 
finitely many  special components and Voisin  
gave counterexamples, see \cite{voisin89, voisin90, voisin1991}. 
She then  formulated the conjecture below:
\begin{conj}
\label{17july2014}
(Harris-Voisin) The union of all special  components of the Noether-Lefschetz locus  is not Zariski dense in $V$. 
\end{conj}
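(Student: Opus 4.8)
The plan is to prove Conjecture \ref{17july2014} by a two-step reduction, the first step being elementary but decisive and the second carrying the real difficulty. First I would observe that the codimension of a special component is constrained to a \emph{finite} range. By Voisin's theorem (Proposition 5.14 of \cite{vo03}) every component of $\NLL_d$ has codimension at most $a$, while a special component has codimension strictly less than $a$ and, being a proper subvariety, at least $1$; Theorem \ref{maintheo2} in fact sharpens the lower bound near the Fermat point to $d-3$. Hence the union of all special components decomposes as the \emph{finite} union $\bigcup_{s=1}^{a-1}S_s$, where $S_s$ is the union of the special components of codimension exactly $s$. Since $V$ is irreducible, a finite union of proper Zariski-closed subsets is again proper; therefore it suffices to prove, for each fixed $s<a$, that the Zariski closure $\overline{S_s}$ is a proper subvariety of $V$.

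For a fixed $s$ I would then locate $S_s$ inside a determinantal locus. By Voisin's computation the codimension of a component equals the rank of the image of its Hodge class under \eqref{IVHS-2014}; thus at a generic point $t$ of a codimension-$s$ component the corresponding nonzero $\lambda\in{\mathcal H}^{b,*}_t$ lies in the fibre of the rank-$\leq s$ locus $\sinone_{c-s}$, so that $S_s$ is contained in the image of the projection $\sinone_{c-s}\to V$. When $s\leq s_{\rm max}$ this projection is by definition not dominant and we are done for that $s$; this is exactly the content of Theorem \ref{maintheo}. The genuine difficulty is the window $s_{\rm max}<s\leq a-1$, where the projection \emph{is} dominant: for generic $t$ there do exist classes of rank $\leq s$, but these are arbitrary elements of the fibre and need not be Hodge classes. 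The first-order determinantal variety $\sinone_{c-s}$, built only from IVHS (the symbol matrix $M$), is therefore too coarse to confine the special components in this range.

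To close this window I would replace IVHS by the \emph{full} algebraic Gauss-Manin connection through the modular foliation $\F$ on the larger parameter space $\T$. A component of $\NLL_d$ of codimension $s$ lifts to a Hodge locus with constant periods, that is, to a leaf of $\F$; its codimension is the rank drop of the defining $1$-forms of $\F$ along that leaf, so the leaf is tangent to an algebraic singular stratum $\Sigma_s\subset\T$ cut out by the vanishing, to all orders, of the Gauss-Manin covariant derivatives of $\lambda$. The matrices $M$ and $N_{j,\alpha}$ of \eqref{23oct2014-1}--\eqref{23oct2014-2} are precisely the first two orders of this infinite system, and the ideals ${\cal I}_s^0\subset{\cal I}_s^1\subset\cdots$ are its successive truncations, which is why $s_{\rm max}^0\leq s_{\rm max}^1\leq s_{\rm max}$. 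The target statement is that the all-orders stratum $\Sigma_s$ projects non-dominantly to $V$ for every $s<a$, equivalently that the tower of refined bounds $s_{\rm max}^k$ stabilizes at $a-1$. Granting this, each $S_s\subset\overline{\mathrm{image}(\Sigma_s\to V)}$ is proper, and the finite-union reduction of the first paragraph finishes the proof.

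The hard part is precisely this last claim, and it is hard for two distinct reasons. Computationally, even the second truncation is out of reach: the paper records that $s_{\rm max}^1$ could be evaluated neither by hand nor by machine, so controlling the whole tower $s_{\rm max}^k$ will require new Gr\"obner-type methods adapted to the structure of $\F$ near the Fermat point. More seriously, there is a structural gap that no finite order of the Gauss-Manin connection can bridge: the strata $\Sigma_s$ are purely algebraic and a priori contain \emph{formal} low-rank leaves along which the periods of $\lambda$ are constant but irrational, so even a non-dominant $\Sigma_s$ would only control formal, not genuine, Hodge classes. Ruling out a Zariski-dense family of honest rank-$s$ Hodge classes therefore appears to demand the integrality of Hodge classes in an essential way — the transcendental ingredient underlying the Cattani-Deligne-Kaplan theorem \cite{cadeka} — which the disguised Gauss-Manin picture isolates but does not supply. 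Bridging this algebra-arithmetic divide is, in my view, the crux of the conjecture.
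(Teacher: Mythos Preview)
This statement is labeled \emph{Conjecture} in the paper for a reason: it is the Harris--Voisin conjecture, and the paper does \emph{not} prove it. There is no ``paper's own proof'' to compare your attempt against. What the paper establishes are partial results in its direction (Theorems \ref{maintheo}, \ref{maintheo2}, \ref{maintheo3} and the conditional Corollary \ref{estamuydirty}), together with an explanation of why the IVHS/modular-foliation machinery, as it stands, cannot settle the full conjecture.

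Your write-up is not a proof either, and you are honest about this: you correctly isolate the window $s_{\rm max}<s\le a-1$ as the genuine obstruction and then explain why neither the first-order data $M$ nor its higher-order refinements ${\cal I}_s^k$ can close it without some transcendental/arithmetic input tied to the rationality of Hodge classes. That diagnosis matches the paper's own assessment (see the remarks following Corollary \ref{estamuydirty}, the discussion of the missing Hodge block $\gma^{\frac{\mov}{2},\frac{\mov}{2}}$ in \S\ref{19aug2014}, and the admission that even $s_{\rm max}^1$ resists computation). Your reduction in the first paragraph and the inclusion $S_s\subset\pi(\sinone_{c-s})$ in the second are correct; one small imprecision is the claim that the codimension of a component \emph{equals} the rank of its Hodge class under \eqref{IVHS-2014}: Voisin's tangent-space computation gives only $\codim H\ge\rank$, which is the inequality you actually need, and equality may fail at non-reduced points.

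In short: there is no gap to name beyond the one you yourself name, because the statement is open. Your proposal is a faithful summary of where the paper's methods reach and why they stop.
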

We  have:
\begin{coro}
\label{estamuydirty}
If  Conjecture \ref{paperyounes} is true for \eqref{20sep2014-1} with $d\geq 4$ then
there is a Zariski open subset $U$ of $V$ such that all components 
of the Noether-Lefschetz locus
intersecting 
$U$ have codimension bigger than or equal $s_{\rm max}+1$, where
\begin{equation}
\label{9oct2014}
s_{\rm max}=\frac{(d-1)(d-2)(d-3)}{6}- \left\lceil \sqrt{ d^4+\frac{2}{3}d^3-16d^2+\frac{7}{3}d+48}-(d^2-7) \right \rceil 
\end{equation}
\end{coro}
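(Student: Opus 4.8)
The plan is to read the corollary off from Theorem \ref{maintheo}, applied to the family $Y/V$ of smooth degree $d$ surfaces in $\Pn 3$ (so $\mov=2$, $k=2$), once the integer $s_{\rm max}$ has been pinned down. By definition the determinantal variety $\sinone_{c-s_{\rm max}}\subset{\mathcal H}^{b,*}$ of homomorphisms of rank $\le s_{\rm max}$ in \eqref{IVHS-bundle} does not dominate $V$, so there is a non-empty Zariski open $U\subset V$ disjoint from the Zariski closure of its image; for $t\in U$ the fibre is empty, i.e.\ the image of \eqref{IVHS-2014} does not meet $D_{s_{\rm max},t}$. Applying Theorem \ref{maintheo} with $s=s_{\rm max}$ then shows that every component of the Noether-Lefschetz locus $\NLL_d=\HL_2(Y/V)$ meeting $U$ has codimension $\ge s_{\rm max}+1$. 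Hence everything reduces to evaluating $s_{\rm max}$: the chain of inequalities recorded just before Conjecture \ref{paperyounes} gives $s_{\rm max}\le\check s_{\rm max}$ unconditionally, while Conjecture \ref{paperyounes} for the multiplication map \eqref{20sep2014-1} says that for generic $t$ the image of \eqref{IVHS-2014} avoids $D_{\check s_{\rm max},t}$, i.e.\ $\sinone_{c-\check s_{\rm max}}\to V$ is non-dominant, whence $s_{\rm max}\ge\check s_{\rm max}$; so $s_{\rm max}=\check s_{\rm max}$ and it remains to compute the right-hand side of \eqref{30sept2014}.

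For this I would use the description of the IVHS of a smooth hypersurface via its Jacobian ring, which in the case of surfaces of degree $d$ in $\Pn 3$ turns \eqref{IVHS-sept} into \eqref{20sep2014-1}; the three dimensions entering \eqref{contradiction90} then come from counting the index sets $I_N$ of \eqref{21oct2014}, namely
\[
a=\#I_{d-4}=\bn{d-1}{3},\qquad
b=\#I_{2d-4}=\bn{2d-1}{3}-4\bn{d}{3},\qquad
r=\#I_{d}=\bn{d+3}{3}-16 .
\]
The first count is immediate (for $d\ge 4$ the constraint $i_e\le d-2$ is automatic in degree $d-4$), and the other two follow by reading off the coefficients of $t^{2d-4}$ and $t^{d}$ in $\bigl((1-t^{d-1})/(1-t)\bigr)^{4}$. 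A routine simplification then gives
\[
b=\frac{2}{3}d^{3}-2d^{2}+\frac{7}{3}d-1,\qquad
\frac{r-a}{2}=d^{2}-7,\qquad
\left(\frac{r-a}{2}\right)^{2}+b=d^{4}+\frac{2}{3}d^{3}-16d^{2}+\frac{7}{3}d+48,
\]
and substituting $a$, $b$ and $\frac{r-a}{2}$ into \eqref{30sept2014} yields exactly the formula \eqref{9oct2014} for $s_{\rm max}=\check s_{\rm max}$. Combined with the first paragraph this proves the corollary.

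The main obstacle is really the hypothesis: Conjecture \ref{paperyounes} is doing all the work, and once it is granted what is left is bookkeeping with binomial coefficients. The two points that deserve a little care are (i) that non-dominance of $\sinone_{c-s}\to V$ is precisely the input needed to invoke Theorem \ref{maintheo} at a generic point, with $U$ the complement of (the Zariski closure of) the image of that determinantal variety; and (ii) the assumption $d\ge 4$, which is exactly the range where $a=\bn{d-1}{3}\ge 1$ — so that $\NLL_d$ is a proper subvariety and the assertion is non-vacuous — and where all the $I_N$ above are non-empty, so that the dimension formulas, and hence the identities leading to \eqref{9oct2014}, are valid.
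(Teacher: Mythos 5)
Your proposal is correct and follows essentially the same route as the paper's own argument in \S\ref{conjsection}: grant Conjecture \ref{paperyounes} to conclude $s_{\rm max}=\check s_{\rm max}$, then evaluate \eqref{30sept2014} using $a=\bn{d-1}{3}$, $\frac{r-a}{2}=d^2-7$ and $b=\frac{2}{3}d^3-2d^2+\frac{7}{3}d-1$ (the paper records these numbers in \eqref{vasacomer}; your generating-function count of $\#I_{2d-4}$ gives the same value), and feed the non-dominance of $\sinone_{c-s_{\rm max}}\to V$ into Theorem \ref{maintheo}. The only cosmetic difference is that the paper re-derives the "does not intersect" clause from the codimension count $b\leq(a-s)(r-s)$, which you instead take from the parenthetical in the conjecture's statement.
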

Let $y_0$ be the quantity inside $\lceil\cdot \rceil$ in \eqref{9oct2014}. For any small real number $\epsilon$
we have
$$
\frac{1}{3}d-\frac{19}{18} \leq y_0\leq \frac{1}{3}d-\frac{19}{18}+\epsilon
$$
where the left hand side equality is for all $d\geq 4$ and the right hand side equality 
is for big $d$ depending on $\epsilon$. Therefore, once Conjecture 
\ref{paperyounes} is verified
we get a good approximation to the Harris-Voisin conjecture. The reason why IVHS cannot do better than this
will be explained during the proof of Theorem \ref{maintheo} and \S\ref{19aug2014}.

\subsection*{Organization of the text}
The organization of the article and the main ideas behind the proof
of our main theorems are as follows.
For a proof Theorem \ref{maintheo} using Voisin's results, 
the reader can go directly to
\S\ref{popped}. Our proof of Theorem \ref{maintheo} is based on the construction of a foliation 
$\F$ on the variety 
$\T:=V\times \A_\C^{\dim(F^{\frac{\mov}{2}})}$, where $F^{\frac{\mov}{2}}$ is the 
$\frac{\mov}{2}$-th piece of the Hodge filtration of the $\mov$-th cohomology 
bundle of $Y/V$. Surprisingly, 
this is not interpreted as the total space of the bundle 
$F^{\frac{\mov}{2}}$ which has been useful in the works \cite{cadeka, voisinHL}.
In \S\ref{foliationsection} we define the variety $\T$ and 
we construct the foliation $\F$.   
In $\T$ we define the Hodge locus with constant periods 
 and we show that it projects into the classical Hodge locus in $V$.
It turns out that the components of the Hodge locus
with constant periods  are leaves of $\F$. We discuss the leaves and singularities of
$\F$ in \S\ref{10oct2014} and \S\ref{10oct2014-1}, respectively. 
In \S\ref{barelaha} we prove our main results announced in the Introduction. 
This is based on a
precise description of the contribution of IVHS in the algebraic expression of
$\F$. The case of  Noether-Lefschetz locus 
is explained in \S\ref{NLlocus}. 
Finally, in \S\ref{19aug2014} we discuss
some problems and conjectures which might be useful for future works.

\subsection*{Notations} 
We explain some of our notations. We denote by $\hn^{ij},\ i+j=\mov$ the Hodge numbers of the $\mov$-th
cohomology of $Y_t$. The dimensions of the pieces of the Hodge filtration of $Y_t$ are denoted by
\begin{equation}
\label{saraghamgin}
 \hn^i:=\hn^{\mov,0}+\hn^{\mov-1,1}+\cdots+\hn^{i,\mov-i}.
\end{equation}
For a $\hn^0\times\hn^0$ matrix $M$ we denote by $M^{ij},\ i,j=0,1,2,\ldots,\mov$ the $\hn^{\mov-i,i}\times \hn^{\mov-j,j}$ submatrix of 
$M$  corresponding to the decomposition  $\hn^0:=\hn^{\mov,0}+\hn^{\mov-1,1}+\cdots+\hn^{0,\mov}$. 
We call $M^{ij},\ i,j=0,1,2\ldots,\mov$ the $(i,j)$-th Hodge block of $M$. In a similar way, for a $\hn^0\times 1$ matrix $M$
we write  $M=[M^i]$, where $M^{i},\ i=0,1,2,\ldots,\mov$ is the $\hn^{\mov-i,i}\times 1$ submatrix of $M$  
corresponding to the decomposition of $\hn^0$ into Hodge numbers. 
For any property "P" of matrices we say that the property "block P" or "Hodge block P" is valid if the property P is valid with respect to the Hodge blocks. For instance, we say that a $\hn^0\times\hn^0$ 
matrix $M$ is block upper triangular if 
$M^{ij}=0,\ \ i>j$. 
We always write a basis of a free $R$-module of rank $\hn^0$ or a vector space  as a $\hn^0\times 1$ matrix. Note that in \eqref{contradiction90}
we have already used the notation $a=\hn^{\frac{\mov}{2}+1,\frac{\mov}{2}-1}$ and $b=\hn^{\frac{\mov}{2},\frac{\mov}{2}}$. 

\subsection*{Acknowledgements}
The author would like to acknowledge the generous support of the
Brazilian science without border program for spending  
a sabbatical year at Harvard University. 
My sincere thanks go to S. T. Yau for the invitation and constant support.
I would like to thank D. van Straten for useful conversations on the first draft of the present paper. 
Thanks also go to J. Harris and C. Voisin  whose comments improved the text.
C. Voisin informed  the author that she was planning to 
approach Conjecture \ref{17july2014}  by studying second order behavior of the 
Noether-Lefschetz locus, but this appeared to be extremely technical and hard to exlpoit.
This idea has been formulated  in \cite{Maclean05}, Theorem 7. The advantage 
of our approach is its immediate consequences which are Theorem \ref{maintheo2} and Theorem \ref{maintheo3}.

\section{Modular foliations}
\label{foliationsection}
The theory of modular foliations in the sense that we use it here, was 
introduced in \cite{ho06-1}. In this section we give the necessary definitions
in order to handle a particular modular foliation constructed from a projective family. 

\subsection{Adding new parameters}
\label{08a2014}
Around any point of $V$ we can find  global
sections $\omega$ of the $\mov$-th relative de Rham cohomology sheaf of $Y/V$ such that $\omega$ at each fiber 
$H_\dR^*(Y_t), \ t\in V$ form a basis compatible with the Hodge filtration. If it is necessary we may
replace $V$ with a Zariski open subset of $V$. 
We take variables $x_1,x_2,\ldots, x_{\hn^{\frac{\mov}{2}}}$ and put them in a $\hn^0\times 1$ matrix $x$ as below.
The first $\frac{\mov}{2}$ Hodge blocks are zero and $x_i$'s are listed in the next blocks:
\begin{equation}
\label{desesperado}
x=\begin{pmatrix}
      0\\
      \vdots\\
      0\\
      x^{\frac{\mov}{2}}\\
      \vdots\\
      x^{\mov}
     \end{pmatrix}.
\end{equation}
We take $\cpe$ a non-zero evaluation of the matrix $x$ by some constants and call it a period vector. For instance, take $\cpe$
a vector with zero entries except for the entry corresponding to $x_1$, which is one.  
Let $\Smat$ be any  Hodge block lower triangular  $\hn^0\times \hn^0$ 
matrix depending on $x$ such that 
\begin{equation}
\label{08august2014}
\Smat\cdot \cpe=x
\end{equation}
and define 
$$
O:=\spec\left(
\C\left [x_1,x_2,\ldots, x_{\hn^{\frac{\mov}{2}}}, \frac{1}{\det(\Smat)}\right ]
\right). 
$$
We can take the matrix $\Smat$ the one  obtained from the identity matrix by replacing the $\hn^{\frac{\mov}{2}+1}+1$ column with $x$ 
in order to get the equality $\Smat\cpe=x$. In this way $\Smat^{-1}$ is obtained from $\Smat$ by 
replacing $x_1$ with $x_1^{-1}$ and $x_i,\ i\geq 2$ with
$-x_ix_1^{-1}$.
We consider the family $\X\to\T$, where $\X:=Y\times O,\ 
\T:= V \times O$. It is obtained from 
$Y\to V$ and the identity map $O\to O$. We also define $\alpha$ by 
\begin{equation}
\label{august2014}
\alpha:=\Smat^{-1}\cdot\omega.
\end{equation}
Let $\nabla: H^\mov_\dR(Y/V)\to \Omega_V\otimes_{\O_V}
H^\mov_\dR(Y/V)$ be the algebraic Gauss-Manin connection (see \cite{kaod68}). We can write $\nabla$
in the basis $\omega$ and define the $\hn^0\times\hn^0$ matrix $\gma$ by the equality: 
$$
\nabla\omega=\gma\otimes \omega.
$$
The entries of $\gma$ are differential 1-forms in $V$. In a similar way we can compute the Gauss-Manin connection  of $\X/\T$ in the basis 
$\alpha$:
$$
\nabla\alpha=\gm\otimes \alpha,
$$
where
\begin{equation}
 \label{omidtaiwan}
\gm=-\Smat^{-1} d\Smat+\Smat^{-1}\cdot \gma\cdot \Smat. 
 \end{equation}
This follows from the construction of the global sections $\alpha$ in \eqref{august2014} and the Leibniz rule.
We call $\gma$ (resp. $\gm$) the Gauss-Manin connection matrix of the pair $(Y/V, \omega)$ (resp. $(\X/\T,\alpha)$). 
 From the integrability of the Gauss-Manin connection it follows that
\begin{equation}
\label{nancy2014}
d\gm+\gm\wedge \gm=0.
\end{equation}

\subsection{Modular foliations}
\label{17aug2014}
Let $\T$ be an algebraic variety and $\gm$ be a $\hn^0\times\hn^0$ matrix whose entries are
differential 1-forms in $\T$ and it satisfies \eqref{nancy2014}. 
 For any $\hn^0\times 1$ matrix $\cpe$, the entries of $\gm\cpe$ induce  a holomorphic foliation $\F$ in $\T$. The integrability of the
distribution given by the kernel of the entries of $\gm \cpe$ follows from 
(\ref{nancy2014}): 
$$
 d(\gm\cdot \cpe)=-d\gm \cdot \cpe=\gm\wedge (\gm\cdot \cpe).
$$
Let $\F$ be a foliation given by a finite collection of differential $1$-forms $\alpha_i,\ \ i=1,2,\ldots,a$ in $\T$.
Its codimension $c$ is the dimension of the vector space generated by $\alpha_i$'s over the functions field of $\T$. 
Its singular set is defined to be
$$
\sing(\F):=\left \{t\in\T \mid 
 \alpha_{i_1}\wedge\alpha_{i_2}\wedge\cdots\wedge \alpha_{i_c}=0,\ \ \forall i_1,i_2,\ldots, i_c=1,2,\ldots,a 
\right  \}.
$$ 
The singular set $\sing(\F)$ is a proper algebraic subset of $\T$. An analytic irreducible 
(not necessarily closed) subset $L$ of $\T$ is tangent to $\F$ if it is 
tangent to the kernel of $\alpha_i$'s. It is called a (local) leaf of $\F$ if it is tangent to $\F$ 
and it is not a proper analytic subset of some $\tilde L$ tangent to $\F$.   
All the leaves of the holomorphic foliation $\F$ in $\T\backslash \sing(\F)$
have the same codimension $c$ and we call them general leaves. We call the others special leaves.
In the literature by a leaf one mainly means a general leaf.

Now, consider the Gauss-Manin connection matrix $\gm$ constructed in \S\ref{08a2014}. 
Let 
$$
\delta_t\in H^\mov(\X_t,\Q)\otimes_\Q \C,\ t\in(\T,t_0)
$$
be a continuous family of cycles, that is, $\delta_t$ is a flat section 
of the Gauss-Manin connection: $\nabla \delta_t=0$. Here, $(\T,t_0)$ is a small neighborhood of $t_0$ in $\T$ in 
the usual topology.
Let us define
\begin{equation}
\label{16aug2014}
L_{\delta_t}:= \left \{ \ \ \ t\in (\T,t_0) \ \ \ |  \ \ \ \langle \alpha,\delta_t\rangle=\cpe\right \},  
\end{equation}
where
\begin{equation}
\label{17f1393}
\langle\cdot,\cdot\rangle: H_\dR^\mov(\X_t)\times H_\dR^{\mov}(\X_t)\to \C,\ \ 
(\beta_1,\beta_2)\mapsto
\frac{1}{(2\pi i)^{\nov}}\int_{\X_t}
\beta_1\cup\beta_2\cup \theta^{\nov-\mov}
\end{equation}
and $\theta\in H_\dR^{2}(\X_t)$ is the element obtained by polarization.
We have the holomorphic function 
$$
f: (\T,t_0)\to \C^{\hn^0}, \ \ 
f(t):=\langle \alpha,\delta_t\rangle-\cpe
$$
which satisfies
\begin{equation}
 \label{28sep2014}
 df=\langle \nabla\alpha,\delta_t\rangle=\gm\cdot \langle \alpha,\delta_t\rangle=\gm\cdot \cpe+\gm\cdot f.
 \end{equation}
 This implies that $\gm\cdot  \cpe$ restricted to $L_{\delta_t}$'s is identically zero. More precisely, 
 the local leaves of $\F$
are given by $L_{\delta_t}$'s. 
Recall the constant period vector $\cpe$ defined in \S\ref{08a2014}. 
\begin{defi}\rm
The Hodge locus  with constant periods $\cpe$ is defined to be the union of all 
$L_{\delta_t}$ in \eqref{16aug2014}  
with $\delta_t\in H^\mov(\X_\t,\Q)$. 
\end{defi}
By definition any component 
of  the Hodge locus with constant periods  is either inside $\sing(\F)$  or 
it is a general leaf of $\F$. From the zero blocks of $\cpe$,
it follows that $\delta_t\in H^{\frac{\mov}{2},\frac{\mov}{2}}$ and so $\delta_t$ is a Hodge class.

\subsection{The algebraic description of modular foliations}
\label{hanoozhich}
We note that the foliation $\F$ in  $\T$ is given by
\begin{eqnarray}
\label{7aug2014}
0&=&\gma^{\frac{\mov}{2}-1,\frac{\mov}{2}} x^{\frac{\mov}{2}} \\
\label{7aug2014-2}
dx^\frac{\mov}{2} &=&  \gma^{\frac{\mov}{2},\frac{\mov}{2}} x^\frac{\mov}{2}+\gma^{\frac{\mov}{2},\frac{\mov}{2}+1} x^{\frac{\mov}{2}+1} ,
\\
\label{7aug2014-1}
dx^i &=& \sum_{j=\frac{\mov}{2}}^\mov \gma^{i,j} x^j,\ \ \ \ \ i=\frac{\mov}{2}+1,\ldots,\mov. 
\end{eqnarray}
 For this we use \eqref{omidtaiwan} and we conclude that $\F$ is given by
 $(-\Smat^{-1} d\Smat+\Smat^{-1}\cdot \gma\cdot \Smat)\cpe$. Since $\cpe$
is a constant vector and $\Smat$ is an invertible matrix and we have (\ref{08august2014}),  
we conclude that $\F$ is given by the entries of $dx-\gma x=0$. Opening this equality and
using the zero blocks of $x$ in (\ref{08august2014}) we
get \eqref{7aug2014}, \eqref{7aug2014-2} and \eqref{7aug2014-1}. 
Note that by Griffiths transversality $\gma^{i,j}=0$ for $j-i\geq 2$. Let 
\begin{equation}
\label{16oct2014}
\alpha:= \gma^{\frac{\mov}{2}-1,\frac{\mov}{2}}\cdot x^{\frac{\mov}{2}}.
\end{equation}
We will not use more $\alpha$ defined in \S\ref{08a2014}. 
Note that  $x^{\frac{\mov}{2}}$ is a $b\times 1$ matrix with unknown entries 
$x_i,\ \ i=1,2,\ldots,b$.
We consider the entries of $\alpha$ as differential forms in 
$V\times \A_\C^b$.
Let $c$  be the dimension of 
the vector space generated by the entries of $\alpha$ and over the functions field  of 
$V\times \A_\C^b$.
We define the algebraic set $\sinone_y,\ \ y=0,1,\ldots,c$ 
to be the Zariski closure of
\begin{equation}
\label{mano-zagier}
\left\{(t,x)\in V\times \A^b_\C \mid x\not=0,\ \ 
\alpha_{i_1}\wedge\alpha_{i_2}\wedge\cdots\wedge \alpha_{i_{c-y+1}}=0,\ \ \forall i_1,i_2,\ldots, i_{c-y+1}=1,2,\ldots,a 
\right  \}.
\end{equation}
We have inclusions of algebraic varieties
$$
\emptyset=\sinone_{c+1}\subset \sinone_{c} \subset \cdots \subset\sinone_1\subset \sinone_0=V\times \A^b_\C .
$$
The 
set $\sinone_y$ in \eqref{mano-zagier}  does not depend on 
the variables in $x^{i},\ i=\frac{\mov}{2}+1,\ldots,\mov$ and so we define
\begin{equation}
 \check\sinone_y:= \sinone_y\times  \A^{\hn^{\frac{\mov}{2}+1}}_\C 
\end{equation}
The affine variety $\T$ is a Zariski open subset of  $V\times \A^{\hn^{\frac{\mov}{2}}}_\C$  given by $x_1\not=0$.
From now on we redefine $\T$ to be $V\times \A^{\hn^{\frac{\mov}{2}}}_\C$. We have 
the foliation $\F$ in $\T$ given by the differential forms  \eqref{7aug2014},\eqref{7aug2014-1} and \eqref{7aug2014-2}.


\subsection{Singularities of modular foliations}
\label{10oct2014}

\begin{prop}
\label{trabalhatrabalha}
The set of singularities of the foliation $\F$ is given by $\check\sinone_1\cup \sintwo$, where 
$\sintwo\subset \T$ is given by $x^{\frac{\mov}{2}}=0$.   
\end{prop}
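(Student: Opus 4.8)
The plan is to read off the singular locus directly from the explicit defining $1$-forms \eqref{7aug2014}, \eqref{7aug2014-2}, \eqref{7aug2014-1} of $\F$ in $\T = V\times\A_\C^{\hn^{\mov/2}}$. Writing $\omega^{1} := \gma^{\frac{\mov}{2}-1,\frac{\mov}{2}}x^{\frac{\mov}{2}}$ (the $a$ entries of $\alpha$ in \eqref{16oct2014}), $\omega^{2} := dx^{\frac{\mov}{2}} - \gma^{\frac{\mov}{2},\frac{\mov}{2}}x^{\frac{\mov}{2}} - \gma^{\frac{\mov}{2},\frac{\mov}{2}+1}x^{\frac{\mov}{2}+1}$ (the $b$ entries), and $\omega^{3}_i := dx^i - \sum_{j}\gma^{i,j}x^j$ for $i=\frac{\mov}{2}+1,\ldots,\mov$, a point lies in $\sing(\F)$ iff every wedge of $c$ of these forms vanishes there, $c$ being the generic codimension. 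First I would observe that the forms $\omega^3$ are always $\C(\T)$-linearly independent among themselves and independent of $\omega^1,\omega^2$ (they are the only ones involving $dx^i$ for $i>\frac{\mov}{2}$), and likewise the forms $\omega^2$ are independent modulo $\omega^3$ because they are the only ones carrying $dx^{\frac{\mov}{2}}$. Hence the rank of the full collection at a point equals $\hn^{\mov/2}-b + (\text{rank of }\omega^1\text{ there})$ as long as the $\omega^2,\omega^3$ remain independent as $1$-forms, which they do everywhere since their leading differential parts never degenerate. Consequently the wedge of any $c$ forms is, up to the nonvanishing factor $\omega^2_1\wedge\cdots\wedge\omega^3_{\cdots}$, a wedge of $(c - \hn^{\mov/2} + b) = c - \hn^{\mov,0}-\cdots$ — more precisely a wedge of $(c_\alpha)$ entries of $\alpha$, where $c_\alpha$ is the generic rank of $\{\omega^1_i\}$. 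Thus $\sing(\F)$ is exactly the locus where the number of independent $\omega^1_i$ drops below its generic value, which by the definition \eqref{mano-zagier} of $\sinone_y$ is precisely $\{x\neq 0\}\cap\check\sinone_1$ together with the locus $x^{\frac{\mov}{2}}=0$ where all $\omega^1_i$ vanish identically.

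More carefully, I would split into the two cases $x^{\frac{\mov}{2}}=0$ and $x^{\frac{\mov}{2}}\neq0$. On $\{x^{\frac{\mov}{2}}=0\}=\sintwo$ every entry of $\alpha=\gma^{\frac{\mov}{2}-1,\frac{\mov}{2}}x^{\frac{\mov}{2}}$ is zero, so the rank drops by exactly $c_\alpha \geq 1$ (note $c_\alpha\geq1$ since the generic leaf has positive codimension coming from the $\alpha$-part — this is where one uses that the foliation genuinely involves \eqref{7aug2014}), whence $\sintwo\subseteq\sing(\F)$. On $\{x^{\frac{\mov}{2}}\neq 0\}$, the $\omega^2,\omega^3$ contribute their full generic count, so membership in $\sing(\F)$ is equivalent to the rank of $\{\omega^1_i\}_{i=1}^a$ at $(t,x)$ being $\leq c_\alpha-1$, i.e. to the vanishing of all $(c_\alpha)$-fold wedges $\omega^1_{i_1}\wedge\cdots\wedge\omega^1_{i_{c_\alpha}}$; but since on this open set the entries of $\alpha$ depend only on $(t,x^{\frac{\mov}{2}})$, this is exactly the defining condition of $\sinone_1$ in \eqref{mano-zagier} (take $c-y+1 = c_\alpha$, i.e. $y=1$, after identifying $c$ there with $c_\alpha$), pulled back along the projection forgetting $x^i$, $i>\frac{\mov}{2}$ — that is, $\check\sinone_1$. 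Combining, $\sing(\F)=\check\sinone_1\cup\sintwo$, with the proviso that I should check $\check\sinone_1$ as defined (Zariski closure) already contains its limit points in $\{x^{\frac{\mov}{2}}=0\}$, which is harmless since those are absorbed into $\sintwo$ anyway.

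The step I expect to be the main obstacle is the bookkeeping that the forms $\omega^2$ and $\omega^3$ never lose rank and are always complementary to the $\omega^1$'s, so that the rank of the whole system is cleanly additive; in particular one must be careful that no $\C(\T)$-linear combination of the $\omega^1_i$ (which involve only $x^{\frac{\mov}{2}}$ and $dx^{\frac{\mov}{2}}$? — no, only the components $x^{\frac{\mov}{2}}$, not their differentials) can conspire with $\omega^2,\omega^3$ to reduce the apparent rank. This is genuinely an algebra-of-$1$-forms computation: the cleanest route is to note that $\omega^1$ involves no $dx$ at all (its coefficients are $1$-forms from $\gma$ on $V$ times the function $x^{\frac{\mov}{2}}$) while $\omega^2,\omega^3$ have invertible "$dx$-part", so after a change of coframe the wedge factorizes. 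Once that factorization is in hand, the identification with $\check\sinone_1\cup\sintwo$ is immediate from the definitions, and no further input is needed.
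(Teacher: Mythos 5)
Your argument is correct and is essentially the paper's own proof written out in full: the paper's one-line justification is exactly your observation that the forms \eqref{7aug2014-2}, \eqref{7aug2014-1} have nondegenerate $dx$-parts in independent variables, so their contribution to the rank is constant and the singular locus is governed by the $\alpha$-part \eqref{7aug2014} alone, giving $\check\sinone_1\cup\sintwo$. (Only a harmless miscount: the constant contribution of those forms is $\hn^{\frac{\mov}{2}}$, not $\hn^{\frac{\mov}{2}}-b$, since there are $b$ forms in \eqref{7aug2014-2} and $\hn^{\frac{\mov}{2}+1}$ in \eqref{7aug2014-1}.)
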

\begin{proof}
This follows from the explicit form \eqref{7aug2014}, \eqref{7aug2014-2} and \eqref{7aug2014-1} and the fact that
all the entries of $x^i$'s in \eqref{7aug2014-2} and \eqref{7aug2014-1} are independent variables.  
\end{proof}
We would like to understand the geometric meaning of the singular set $\sintwo$. 
Recall the definition of $L_{\delta_t}$'s in \eqref{16aug2014}. 
\begin{prop}
\label{manosad}
 There is no component of the Hodge locus with constant periods 
 inside $\sintwo$. 
\end{prop}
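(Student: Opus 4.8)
The plan is to argue by contradiction: suppose some component $H$ of the Hodge locus with constant periods $\cpe$ is contained in $\sintwo$, i.e.\ $x^{\frac{\mov}{2}}\equiv 0$ on $H$. Recall that by construction $H$ is of the form $L_{\delta_t}$ for a continuous family of Hodge classes $\delta_t\in H^\mov(\X_t,\Q)$, and that on $L_{\delta_t}$ we have the fundamental relation $\langle\alpha,\delta_t\rangle=\cpe$. The first observation is that the period vector $\cpe$ was chosen (in \S\ref{08a2014}) to have a nonzero entry in the $\hn^{\frac{\mov}{2}+1}+1$ coordinate, i.e.\ its first nonzero block is $\cpe^{\frac{\mov}{2}}$ and $\cpe^{\frac{\mov}{2}}\neq 0$. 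So the defining equations of $L_{\delta_t}$ include the block equation $\langle\alpha^{\frac{\mov}{2}},\delta_t\rangle=\cpe^{\frac{\mov}{2}}\neq 0$, where $\alpha^{\frac{\mov}{2}}$ denotes the piece of the de Rham basis $\alpha=\Smat^{-1}\omega$ spanning the $\frac{\mov}{2}$-th graded piece $\gr_{\frac{\mov}{2}}$ of the Hodge filtration.

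Next I would translate the hypothesis $x^{\frac{\mov}{2}}=0$ on $H$ back into a statement about $\alpha$ versus $\omega$ through the relation $\alpha=\Smat^{-1}\omega$ and the equation $\Smat\cdot\cpe=x$. Since $x$ has zero blocks above position $\frac{\mov}{2}$ and now also zero block in position $\frac{\mov}{2}$, the vector $x$ lies in the span of the blocks $x^{\frac{\mov}{2}+1},\ldots,x^\mov$, which are the independent coordinates on $\A_\C^{\hn^{\frac{\mov}{2}+1}}$ pulled back to $\sintwo$; in particular the column of $\Smat$ that carries $\cpe$ has vanishing $\frac{\mov}{2}$-th block. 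Because $\Smat$ is Hodge block lower triangular and $\cpe$ is concentrated (up to lower blocks) in the $\frac{\mov}{2}$-th block, computing $\langle\alpha,\delta_t\rangle=\Smat^{-1}\langle\omega,\delta_t\rangle$ and taking the $\frac{\mov}{2}$-th block shows that the $\frac{\mov}{2}$-th block of $\langle\alpha,\delta_t\rangle$ equals a nonzero scalar multiple of the $\frac{\mov}{2}$-th block of $\langle\omega,\delta_t\rangle$ divided by $x_1$ — but on $\sintwo$ with $x^{\frac{\mov}{2}}=0$ the relevant entry of $\Smat^{-1}$ forces this block to be $0$, contradicting $\langle\alpha^{\frac{\mov}{2}},\delta_t\rangle=\cpe^{\frac{\mov}{2}}\neq 0$.

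More conceptually, what is really going on: on $\sintwo$ the $\frac{\mov}{2}$-th periods $\langle\alpha^{\frac{\mov}{2}},\delta_t\rangle$ are forced to vanish, whereas membership in a component of the Hodge locus with constant periods requires these periods to equal the fixed \emph{nonzero} vector $\cpe^{\frac{\mov}{2}}$; the two conditions are incompatible, so no such component can lie in $\sintwo$. The cleanest way to present this is to note that $\sintwo=\{x^{\frac{\mov}{2}}=0\}$ and that $x=\Smat\cpe$ with $\cpe^{\frac{\mov}{2}}\neq0$ implies the $(\frac{\mov}{2},\frac{\mov}{2})$-block of $\Smat$, applied to $\cpe^{\frac{\mov}{2}}$, vanishes on $\sintwo$ — an algebraic condition that the locus $\{x_1\neq 0\}$, hence $\T$, explicitly avoids once one recalls the chosen form of $\Smat$ (identity with one column replaced by $x$, so the $(\frac{\mov}{2},\frac{\mov}{2})$ block is $\mathrm{diag}(x_1,\ldots)$ on the relevant coordinate and is invertible precisely on $\T$). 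Therefore $\sintwo\cap\T$ and the Hodge locus with constant periods are disjoint, and no component of the latter sits inside $\sintwo$.

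The main obstacle I anticipate is bookkeeping: making precise which entry of $\cpe$ is nonzero, how the block structure of the lower-triangular $\Smat$ interacts with that of $\cpe$, and confirming that the chosen normalization of $\Smat$ (the one obtained from the identity by replacing one column with $x$) genuinely makes the relevant diagonal block invertible exactly on $\T=\{x_1\neq0\}$. None of this is deep, but one must be careful that the "constant period" $\cpe$ has its nonzero coordinate in the $\frac{\mov}{2}$-th (not a lower) block, and that the family $\delta_t$ is genuinely rational (so the periods are not forced to vanish for trivial reasons); both are guaranteed by the construction in \S\ref{08a2014} and by $\delta_t\in H^\mov(\X_t,\Q)$ together with $\delta_t\in H^{\frac{\mov}{2},\frac{\mov}{2}}$ being a nonzero Hodge class.
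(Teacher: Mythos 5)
There is a genuine gap here. Both versions of your argument ultimately rest on the observation that $\Smat$ is singular where $x^{\frac{\mov}{2}}=0$ (indeed $\det\Smat=x_1$ for the chosen normalization), so that the chart $\{x_1\neq 0\}$ on which $\alpha=\Smat^{-1}\omega$ is defined misses $\sintwo$. But at the end of \S\ref{hanoozhich} the paper redefines $\T$ to be all of $V\times\A^{\hn^{\frac{\mov}{2}}}_\C$, and on this $\T$ the Hodge locus with constant periods is characterized intrinsically by $\langle\omega,\delta_t\rangle=x$ with $\delta_t\in H^\mov(\X_t,\Q)$, an equation that makes perfect sense where $x_1=0$. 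A component inside $\sintwo$ would be a family of nonzero rational classes with $\langle\omega^i,\delta_t\rangle=0$ for all $i=0,1,\ldots,\frac{\mov}{2}$, and nothing in the linear algebra of $\Smat$ rules this out: the $\frac{\mov}{2}$-th block of $\Smat\cpe=x$ reads $x^{\frac{\mov}{2}}=x^{\frac{\mov}{2}}$, a tautology, and your intermediate claim that ``the relevant entry of $\Smat^{-1}$ forces this block to be $0$'' is not valid --- the expression $\Smat^{-1}\langle\omega,\delta_t\rangle$ is simply indeterminate there, not zero. If your reading were the intended one, the proposition would be vacuous, and Proposition \ref{trabalhatrabalha}, which exhibits $\sintwo$ as a nonempty component of $\sing(\F)$ whose geometric meaning the paper then sets out to explain, would have nothing to explain.

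The missing idea is Hodge-theoretic, and it is exactly where the rationality of $\delta_t$ --- which you mention only as a side condition to be ``confirmed'' --- must actually be used. Since $\delta_t$ is rational it is real, so $\langle\omega^i,\delta_t\rangle=0$ for $i\leq\frac{\mov}{2}$ forces $\langle\overline{\omega^i},\delta_t\rangle=\overline{\langle\omega^i,\delta_t\rangle}=0$ as well; the entries of $\omega^i$ and $\overline{\omega^i}$ for $i=0,\ldots,\frac{\mov}{2}$ span $F^{\frac{\mov}{2}}+\overline{F^{\frac{\mov}{2}}}=H^\mov_\dR(\X_t)$, and the pairing is nondegenerate, so $\delta_t=0$. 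This is the paper's proof, and no argument avoiding the reality of $\delta_t$ can succeed: nonzero (necessarily non-real) classes in $F^{\frac{\mov}{2}+1}$ already pair to zero with all of $F^{\frac{\mov}{2}}$, so the period equations you manipulate admit plenty of nonzero complex solutions.
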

\begin{proof}
  Using the equalities
\eqref{august2014} and \eqref{08august2014},   
we know that a Hodge locus with constant periods
is given by $L_{\delta_t}: \langle \omega,\delta_t \rangle =x,\ \delta_t\in H^\mov(\X_t,\Q)$. 
The first $\frac{\mov}{2}$ Hodge 
blocks of $x$
are already zero and if $L_{\delta_t}\subset \sintwo$ then the 
next $(\frac{\mov}{2}+1)$-th  Hodge 
block is also zero.
If $\delta_t\in H^\mov(\X_t,\R)$ then we have 
$$
 \langle \omega^i,\delta_t \rangle  = \langle \overline{\omega^{i} },\delta_t \rangle=0,\ \ i=0,1,\ldots,\frac{\mov}{2},
$$
where $\overline{\omega}$ is the complex conjugation of the 
differential forms in the entries of  $\omega$.
Since the entries of $\overline{\omega^{i}}$ and $\omega^{i},  i=0,1,\ldots,\frac{\mov}{2}$ generate
the $\mov$-th complex de Rham cohomology of each fiber $\X_t,\ \ t\in \T$, 
we conclude that $\delta_t=0$. Therefore, 
for $L_{\delta_t}\subset \sintwo$, the cycle $\delta_t$ has not 
real coefficients. Note that a Hodge locus with constant periods
is defined using cohomology classes with rational coefficients.   
\end{proof}

\subsection{Leaves of modular foliations}
\label{10oct2014-1}
Let $\tilde r:=\dim V$. For simplicity, the reader can take an $r$-dimensional subvariety of $V$ such that the Kodaira-Spencer map
over its points is an isomorphism and so, follow the arguments with $\tilde r=r$. 
\begin{prop}
\label{28s2014}
 Any component of the analytic set $L_{\delta_t}$ which intersects $\T-\check\sinone_{y+1}$ has dimension $\leq \tilde r-c+y$. 
\end{prop}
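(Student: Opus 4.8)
The plan is to deduce the estimate from the fact, established in \S\ref{17aug2014} around \eqref{28sep2014}, that the local leaves of $\F$ are exactly the sets $L_{\delta_t}$, combined with a pointwise rank count of the $1$-forms \eqref{7aug2014}, \eqref{7aug2014-2}, \eqref{7aug2014-1} that define $\F$. So let $Z$ be an irreducible component of $L_{\delta_t}$ meeting $\T-\check\sinone_{y+1}$. Since $L_{\delta_t}$, and hence each of its components $Z$, is tangent to $\F$, at every smooth point $p$ of $Z$ the tangent space $T_pZ$ is contained in the common kernel in $T_p\T$ of the forms \eqref{7aug2014}, \eqref{7aug2014-2}, \eqref{7aug2014-1}. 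The whole argument is then to exhibit a smooth point $p$ of $Z$ at which this common kernel has dimension $\le\tilde r-c+y$.

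The first ingredient is a rank computation valid at any $p=(t,x)\in\T$. The $\hn^{\frac{\mov}{2}}$ forms appearing in \eqref{7aug2014-2} and \eqref{7aug2014-1} have, among them, $dx$-parts equal to the full collection $dx^{\frac{\mov}{2}},\dots,dx^{\mov}$ of coordinate differentials on the affine factor $\A^{\hn^{\frac{\mov}{2}}}_\C$ of $\T$ (each coordinate appears exactly once, since the left-hand sides of \eqref{7aug2014-2}--\eqref{7aug2014-1} run over all the blocks $x^{\frac{\mov}{2}},\dots,x^{\mov}$). Hence these forms are everywhere pointwise linearly independent, and they span a subspace of $T_p^*\T$ meeting trivially the span of the forms $\alpha=\gma^{\frac{\mov}{2}-1,\frac{\mov}{2}}x^{\frac{\mov}{2}}$ of \eqref{16oct2014} (that is, of \eqref{7aug2014}), which involve no $dx_i$. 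Therefore the rank at $p$ of the full system equals $\hn^{\frac{\mov}{2}}+\rank\{\alpha_i(p)\}$. The second ingredient connects this to $\check\sinone_{y+1}$: if $x^{\frac{\mov}{2}}\neq0$ at $p$ and $p\notin\check\sinone_{y+1}$, then $\rank\{\alpha_i(p)\}\ge c-y$, for otherwise every $(c-y)$-fold wedge $\alpha_{i_1}\wedge\cdots\wedge\alpha_{i_{c-y}}$ vanishes at $p$, so $(t,x^{\frac{\mov}{2}})$ belongs to the set defining $\sinone_{y+1}$ in \eqref{mano-zagier}, forcing $p\in\check\sinone_{y+1}$, a contradiction.

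It remains to produce a smooth point $p$ of $Z$ with $p\notin\check\sinone_{y+1}$ and $x^{\frac{\mov}{2}}(p)\neq0$. The hypothesis gives $Z\not\subset\check\sinone_{y+1}$, and I claim also $Z\not\subset\sintwo$. Indeed, if $x^{\frac{\mov}{2}}\equiv0$ on $Z$, then using $\langle\omega,\delta_t\rangle=x$ on $L_{\delta_t}$ (which follows from \eqref{august2014} and \eqref{08august2014}) all periods $\langle\omega^i,\delta_t\rangle$, $i=0,\dots,\frac{\mov}{2}$, vanish on $Z$, hence so do $\langle\overline{\omega^i},\delta_t\rangle$ since $\delta_t\in H^{\mov}(\X_t,\R)$; as these classes generate the de Rham cohomology of each fiber, $\delta_t\equiv0$, contradicting $L_{\delta_t}\neq\emptyset$. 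This is precisely the argument of Proposition \ref{manosad}. Now $\check\sinone_{y+1}$ and $\sintwo$ are algebraic, $Z$ is irreducible analytic and is contained in neither, so $Z\setminus(\check\sinone_{y+1}\cup\sintwo)$ is a nonempty dense open subset of $Z$ and meets the smooth locus of $Z$; choose $p$ there. By the rank computation the common kernel at $p$ of \eqref{7aug2014}, \eqref{7aug2014-2}, \eqref{7aug2014-1} has dimension $\le\dim\T-(c-y)-\hn^{\frac{\mov}{2}}$, and since $\dim\T=\tilde r+\hn^{\frac{\mov}{2}}$ and $T_pZ$ lies in it,
\[
\dim Z=\dim T_pZ\le\tilde r-c+y .
\]

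The purely formal part of the argument is the linear algebra of the second paragraph (the $dx$-block of \eqref{7aug2014-2}--\eqref{7aug2014-1} is of full rank $\hn^{\frac{\mov}{2}}$ and transverse to the span of $\alpha$) together with the standard fact that an analytic set tangent to a system of $1$-forms has dimension at most that of the common kernel of the forms at any of its smooth points. The one point that genuinely needs care is that the chosen $p$ can be taken outside $\sintwo$: for $y<c$ the forms $\alpha$ degenerate identically on $\sintwo$, so the rank estimate would be vacuous there, and it is exactly the conjugation argument of Proposition \ref{manosad} that rules out a component $Z$ sitting inside $\sintwo$. I expect this to be the only non-routine input.
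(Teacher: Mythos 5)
Your argument is correct and follows essentially the same route as the paper's proof: bound $\dim Z$ by the corank, at a suitably chosen point, of the $1$-forms \eqref{7aug2014}, \eqref{7aug2014-2}, \eqref{7aug2014-1}, whose span has dimension exactly $\hn^{\frac{\mov}{2}}+(c-k)$ at points of $\check\sinone_{k}\setminus\check\sinone_{k+1}$ with $x^{\frac{\mov}{2}}\neq 0$. Your explicit appeal to the conjugation argument of Proposition \ref{manosad} to move the chosen point off $\sintwo$ (where all the $\alpha_i$ vanish and the rank count would degenerate) makes precise a step the paper's two-line proof leaves implicit, but it is a refinement of the same argument rather than a different one.
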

\begin{proof}
Let $t$ be a point in $\T-\check\sinone_{y+1}$. We have $t\in \check\sinone_k\backslash \check \sinone_{k+1}$
for some $k$ in the set $\{0,1,\ldots,y\}$.
By definition of $\check\sinone_y$'s, 
the dimension of the $\C$-vector space $A$ spanned by the  differential forms  \eqref{7aug2014} ,\eqref{7aug2014-2}, \eqref{7aug2014-1} is exactly 
$\hn^{\frac{\mov}{2}}+c-k$, and so the kernel of such differential forms is of dimension $\tilde r-c+k$. Note that $\dim(\T)= \hn^{\frac{\mov}{2}}+\tilde r$.
\end{proof}

Recall the construction of $\T$ in \S\ref{08a2014} and 
let $\T\to V$ be the projection on $V$.  
 \begin{prop}
\label{unknownguy}
 Any component $H$ of the Hodge locus in $V$ is the projection under the map $\T\to V$ of a 
component $L$ of the Hodge locus with constant
 periods  in $\T$. Moreover, $\dim(H)=\dim(L)$.
 \end{prop}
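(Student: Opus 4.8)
The plan is to set up a correspondence between the classical Hodge locus in $V$ and the Hodge locus with constant periods in $\T$, using the fact that the fibre of $\T\to V$ over a point $t\in V$ is the affine space $\A^{\hn^{\frac{\mov}{2}}}_\C$ parametrizing (via the basis $\omega$) the possible values of the period vector $\langle\omega,\delta\rangle$ for classes $\delta\in H^\mov_\dR(Y_t)$ lying in $F^{\frac{\mov}{2}}$. First I would recall from \S\ref{10oct2014-1} and the definition of $L_{\delta_t}$ in \eqref{16aug2014} that, after the identification $\alpha=\Smat^{-1}\omega$ and $\Smat\cpe=x$, a point of $L_{\delta_t}$ is a pair $(t,x)$ with $t\in V$ and $x=\langle\omega,\delta_t\rangle$, where $\delta_t$ is the flat (Gauss--Manin) continuation of a rational Hodge class. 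The zero blocks in \eqref{desesperado} encode precisely the condition that $\delta_t\in F^{\frac{\mov}{2}}$, which together with rationality gives $\delta_t\in H^{\frac{\mov}{2},\frac{\mov}{2}}\cap H^\mov(\X_t,\Q)$, i.e. a Hodge class; this is exactly the remark already made after the definition of the Hodge locus with constant periods.

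Next, given a component $H$ of $\HL_\mov(Y/V)$ with its continuous family of Hodge classes $\delta_t\in H^\mov(Y_t,\Q)\cap H^{\frac{\mov}{2},\frac{\mov}{2}}$, I would define $L\subset\T$ to be (a component of) the locus $\{(t,\langle\omega,\delta_t\rangle)\mid t\in H\}$, i.e. the graph over $H$ of the period map $t\mapsto\langle\omega,\delta_t\rangle$. Because $\delta_t$ is a flat section, the period vector $\langle\omega,\delta_t\rangle$ is a holomorphic (indeed, along $H$, locally constant in the appropriate trivialization) function of $t$, so this graph is an analytic subset of $\T$ of the same dimension as $H$, and by construction its projection to $V$ is $H$. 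One must check that $L$ is a \emph{component} of the Hodge locus with constant periods and not merely contained in a larger one: here I would invoke Proposition \ref{28s2014}, which bounds the dimension of any component of $L_{\delta_t}$ meeting $\T-\check\sinone_{y+1}$, together with Voisin's bound (Proposition 5.14 of \cite{vo03}) on the codimension of $H$ in $V$; matching these two dimension counts forces $L$ to be maximal. Conversely, projecting a component $L$ of the Hodge locus with constant periods down to $V$ produces a locus carrying the continuous family $\delta_t$, hence contained in some component $H$ of $\HL_\mov(Y/V)$, and the two constructions are mutually inverse on components, giving the bijection and the equality $\dim H=\dim L$.

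The main obstacle I anticipate is the maximality/matching-of-components argument: a priori the fibre of $\T\to V$ over a point of $H$ could contain several distinct period vectors (coming from different rational Hodge classes, or from $\delta_t$ and its monodromy translates), so the preimage of $H$ in $\T$ may be reducible, and one has to argue that each irreducible piece is genuinely a leaf of $\F$ of the expected dimension, using that $L_{\delta_t}$ is cut out locally by the independent differential forms \eqref{7aug2014}, \eqref{7aug2014-2}, \eqref{7aug2014-1} and that $\cpe$ (equivalently the relevant $x_1\neq 0$ slice) pins down the period vector up to the discrete monodromy action. The equality of dimensions then follows because the projection $L\to H$ is finite-to-one (the period vector of $\delta_t$ is determined by $t$ up to the monodromy of $\delta_t$, which acts with discrete orbits), so no dimension is lost or gained, and Proposition \ref{manosad} guarantees $L$ is not swallowed by the spurious singular stratum $\sintwo$.
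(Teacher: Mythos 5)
Your proof takes essentially the same route as the paper's: realize $L$ as the graph over $H$ of the holomorphic period map $t\mapsto\langle\omega,\delta_t\rangle$ (whose first $\frac{\mov}{2}$ Hodge blocks vanish exactly because $\delta_t$ is a Hodge class), so that the remaining coordinates $x$ are free and impose no further constraints on $t$, making the projection $L\to H$ dimension-preserving with image $H$. The additional maximality/monodromy discussion is not needed --- and invoking Voisin's codimension bound there would be mildly circular, since the paper obtains that bound as a corollary of this proposition together with Proposition \ref{28s2014} --- but the core argument coincides with the paper's.
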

 \begin{proof}
Let $\delta_{t_0}\in H^\mov(\X_{t_0}, \Q)\cap H^{\frac{\mov}{2},\frac{\mov}{2}}$ be a Hodge class. 
This is equivalent to say that
it is in the set 
$$
H:= \{ t\in (\T,t_0) \ \ \mid  \ \ \langle \omega^{i},\delta_t\rangle=0,\ \ \ i=0,1,\cdots, \frac{\mov}{2}-1 \}
$$ 
which is the Hodge locus passing through 
$t_0$.  The Hodge locus $L$ in $\T$ with the constant periods
 $\cpe$ is given by the set of $(t,x)\in \T$ such that
 $$
\langle \alpha,\delta_t\rangle=\Smat^{-1} \langle \omega,\delta_t\rangle=\cpe, 
 $$
 or equivalently 
\begin{equation}
\label{16a2014}
\langle \omega,\delta_t\rangle =x,
 \end{equation}
 where we have used \eqref{08august2014}. 
The first $\frac{\mov}{2}$ Hodge blocks of this equality are just the equalities in the definition
of $H$ and  hence $t$ must lie in the Hodge locus $H$. 
For others, the entries of the left hand side of \eqref{16a2014} are independent  variables and the 
entries of the right hand side are holomorphic functions in $H$ 
(from Deligne-Cattani-Kaplan theorem in \cite{cadeka} it follows that they are actually algebraic functions in $H$).
This implies that these equalities do not produce further constrains on $t$ and so the proposition is proved. 
 \end{proof}
 Note that if $H$ is a component of the Hodge locus in $V$  then Proposition \ref{28s2014} and Proposition \ref{unknownguy} imply 
 that the codimension of $H$ in $V$ is less 
than or equal to $a=\hn^{\frac{\mov}{2}-1,\frac{\mov}{2}+1}$ which is Proposition 5.14 in Voisin's book \cite{vo03}.

\section{Proofs}
\label{barelaha}
So far we have used the full Gauss-Manin connection 
in order to construct and study the modular foliation $\F$.
In this section we  remind which part of the Gauss-Manin
connection is IVHS. This will give us the proof of our main theorems.
Throughout the present section we will redefine $x$ to be the middle Hodge block 
$x^{\frac{\mov}{2}}$  of the matrix $x$ defined in \eqref{desesperado}.

\subsection{Infinitesimal variation of Hodge structures}
\label{16aug2014-2}
For definitions and details of  the concepts used below see \cite{CGGH1983} and \cite{voisinHL}.
Let 
$$
H^\mov_\dR(Y/V):=\cup_{t\in V}H_\dR^\mov(Y_t)
$$
be the algebraic 
de  Rham cohomology bundle of $Y/V$ and let $F^{k},\ \ k=0,2,\ldots,\mov+1$ be the subbundles of 
$H^\mov_\dR(Y/V)$  corresponding to Hodge filtration in its fibers. By Griffiths transversality theorem, 
the Gauss-Manin connection of $Y/V$ 
induces maps 
\begin{equation}
 \label{15aug2014}
\nabla_{k}: H^{k,\mov-k}\to \Omega_V^1\otimes_{\O_V} H^{k-1,\mov-k+1},\ \  \ k=1,2,\ldots, \mov,
\end{equation}
where $H^{k,\mov-k}:=F^k/F^{k+1}$. 
One usually use the canonical identifications
\begin{equation}
\label{asabkhord}
F^{k}/F^{k+1}\cong H^{\mov-k}(Y_t,\Omega_{Y_t}^k)
\end{equation}
compose the Gauss-Manin connection with vector fields in $V$ and  arrives at
$$
(T_V)_t\to {\rm Hom} (  H^{\mov-k}(Y_t,\Omega_{Y_t}^k),\   H^{\mov-k+1}(Y_t,\Omega_{Y_t}^{k-1}))
$$
Further, one may use a theorem of Griffiths which says that the above maps are the composition of the Kodaira-Spencer map \eqref{arezoyematoyito}  
and 
\begin{equation}
\label{IVHS}
\delta_{\mov,k}=\delta_k: H^1(Y_t,T_{Y_t})_\theta\to {\rm Hom} \left (  H^{\mov-k}(Y_t,\Omega_{Y_t}^k),\   H^{\mov-k+1}(Y_t,\Omega_{Y_t}^{k-1})\right )
\end{equation}
which is obtained by contraction of differential forms along vector fields. 
Hopefully, $\delta_k$
will not be confused with the topological cycle $\delta_t$ used in previous sections.  
Here $\theta$ is the element in $H^1(Y_t, \Omega_V^1)$ induced
by the polarization of $Y_t$ and 
$$
H_1(Y_t,T_{Y_t})_\theta:=\{v\in H_1(Y_t,T_{Y_t}) \mid \delta_{2,1}(v)(\theta)=0\}. 
$$
The data (\ref{IVHS}) is known as the infinitesimal variation 
of Hodge structures at $t$ (IVHS), see \cite{CGGH1983}. From this we get
\begin{equation}
\label{IVHS-dual}
\delta_k^*: H^{\mov-k+1}(Y_t,\Omega_{Y_t}^{k-1})^*\to {\rm Hom} \left ( H^1(Y_t,T_{Y_t})_\theta,  H^{\mov-k}(Y_t,\Omega_{Y_t}^k)^*  \right )
\end{equation}
Let $\omega$ be the basis of  $H^\mov_\dR(Y/V)$ chosen in \S\ref{08a2014}. 
This induces a basis for both $H^{\mov-k+1}(Y_t,\Omega_{Y_t}^{k-1})$ and 
$H^{\mov-k}(Y_t,\Omega_{Y_t}^k)$ which we denote them by $\omega_*$ 
and $\varpi_*$, 
respectively. 

Around a smooth point of $V$ we choose coordinate system  $(t_1,t_2,\ldots,t_{\tilde r})$ and we denote by the same notation 
the image of the vector field 
$\frac{\partial}{\partial t_i},\ \ i=1,2,\ldots, \tilde r$ under the Kodaira-Spencer map \eqref{arezoyematoyito}.
Let $\gma$ be the Gauss-Manin connection matrix of $Y/V$
written in the basis $\omega$ and used in \S\ref{08a2014}. 
\begin{prop}
\label{13oct2014-1}
 We have
 \begin{equation}
 \label{mikhaminoyejayibebaram}
 \gma^{k,k-1}=\sum_{j=1}^{\tilde r} \gma_j^{k,k-1} dt_j
 \end{equation}
 where $\gma_j^{k,k-1}$ is the $a\times b$ matrix of $\delta_k( \frac{\partial}{\partial t_j} )$
 written in the bases $\omega_*$ and $\varpi_*$. 
\end{prop}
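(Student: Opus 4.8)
The plan is to deduce the proposition directly from the description of the IVHS recalled in \S\ref{16aug2014-2}, essentially by writing Griffiths' theorem out in the explicit basis $\omega$ and local coordinates $(t_1,\ldots,t_{\tilde r})$. First I would expand the connection matrix in the coordinate coframe: since $\gma$ is, by definition, the matrix of $\nabla$ in the basis $\omega$, we have $\gma=\sum_{j=1}^{\tilde r}\gma_j\,dt_j$, where $\gma_j$ is the $\hn^0\times\hn^0$ matrix of functions representing the operator $\nabla_{\partial/\partial t_j}$ in the basis $\omega$, i.e. $\nabla_{\partial/\partial t_j}\omega=\gma_j\,\omega$. Extracting the $(k,k-1)$ Hodge block commutes with this $\C$-linear expansion, so $\gma^{k,k-1}=\sum_j\gma_j^{k,k-1}\,dt_j$ at once, and the entire content of the proposition is the identification of the coefficient matrix $\gma_j^{k,k-1}$ with the matrix of $\delta_k(\partial/\partial t_j)$ in the bases $\omega_*$ and $\varpi_*$.

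Second, I would pass to the associated graded of the Hodge filtration. Because $\omega$ was chosen compatible with $F^\bullet$, its successive sub-vectors give bases of the pieces $F^m$, and hence of the graded quotients $F^m/F^{m+1}$; under the (algebraic) canonical isomorphism \eqref{asabkhord} these induced bases are precisely the bases $\varpi_*$ of $H^{\mov-k}(Y_t,\Omega_{Y_t}^{k})$ and $\omega_*$ of $H^{\mov-k+1}(Y_t,\Omega_{Y_t}^{k-1})$ that appear in the statement. Griffiths transversality --- already recorded above as $\gma^{i,j}=0$ for $j-i\geq 2$ --- guarantees that the operator $\nabla_{\partial/\partial t_j}$ descends to the graded map \eqref{15aug2014} contracted against $\partial/\partial t_j$, and that, read in the bases induced by $\omega$, this descended map is represented by the Hodge block $\gma_j^{k,k-1}$. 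This step is just the elementary remark that the relevant Hodge block of the matrix of an operator, written in a basis adapted to a filtration, computes the induced map on the corresponding graded quotients.

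Third, I would invoke the theorem of Griffiths recalled in \S\ref{16aug2014-2}: the graded Gauss--Manin map \eqref{15aug2014}, composed with a tangent vector of $V$, factors as the Kodaira--Spencer map \eqref{arezoyematoyito} followed by the contraction map $\delta_k$ of \eqref{IVHS}. Specializing to $v=\partial/\partial t_j$, and using the convention fixed just before the proposition that the image of $\partial/\partial t_j$ under Kodaira--Spencer is again denoted $\partial/\partial t_j$, identifies the descended map with $\delta_k(\partial/\partial t_j)$. One must also note that $\partial/\partial t_j$ does land in $H^1(Y_t,T_{Y_t})_\theta$, so that $\delta_k$ can be applied to it: this holds because $Y\to V$ is a family of \emph{projective} varieties, whence the polarization class $\theta$ is a flat section of the Gauss--Manin connection and $\delta_{2,1}(\partial/\partial t_j)(\theta)=0$. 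Combining the three steps gives that $\gma_j^{k,k-1}$ is the matrix of $\delta_k(\partial/\partial t_j)$ in $\omega_*,\varpi_*$, which is \eqref{mikhaminoyejayibebaram}.

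The statement is essentially bookkeeping, so there is no deep obstacle; the one point that needs care --- and the only place where an error could slip in --- is the matching of conventions in the second step. One must check that the bases $\omega_*$ and $\varpi_*$ are listed in the correct order and that ``the matrix of $\delta_k(\partial/\partial t_j)$'' is read in the row-versus-column convention making the two sides literally equal, rather than equal up to transposition or up to the dualization passing from \eqref{IVHS} to \eqref{IVHS-dual}--\eqref{IVHS-2014}; and that the Hodge-block indices attached to $\gma$ and the index $k$ of $\delta_k$ name the same graded piece of $\nabla$. The substantive input --- Griffiths' identification of the graded Gauss--Manin connection with the IVHS --- is used here as a black box, as it is recalled, with references, in \S\ref{16aug2014-2}.
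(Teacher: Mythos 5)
Your argument is correct and follows exactly the route the paper takes: its entire proof of Proposition \ref{13oct2014-1} is the one line ``This follows from the identifications \eqref{asabkhord},'' and your three steps (expanding $\gma$ in the coframe $dt_j$, reading the Hodge block as the induced map on graded pieces via \eqref{asabkhord}, and invoking the Griffiths factorization through the Kodaira--Spencer map already recalled in \S\ref{16aug2014-2}) are precisely the bookkeeping that line leaves implicit. Your closing remark about matching row/column and dualization conventions is the right thing to be careful about, and is consistent with the paper's later use of $\gma_j^{k,k-1}$ as an $a\times b$ matrix acting on the $b\times 1$ vector $x$ in \eqref{lovelymatrix}.
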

\begin{proof}
 This follows from the identifications \eqref{asabkhord}.
\end{proof}
For the proposition below we set $k=\frac{\mov}{2}+1$. 
\begin{prop}
\label{13oct2014-2}
For $y=0,1,\ldots,c$ the determinantal variety of homomorphisms of rank $\leq c-y$  
of \eqref{IVHS-dual} 
is given by $\sinone_y$ defined in \eqref{mano-zagier}. 
\end{prop}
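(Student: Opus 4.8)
The plan is to unwind the definition \eqref{16oct2014} of the $1$-forms $\alpha$ and to compare, fibrewise over $V$, the vanishing of their exterior products with the rank of the homomorphism \eqref{IVHS-dual}.

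First I would recall from \eqref{16oct2014} that the $a\times 1$ column $\alpha$ of $1$-forms on $V\times\A^b_\C$ equals $\gma^{\frac{\mov}{2}-1,\frac{\mov}{2}}\cdot x^{\frac{\mov}{2}}$, and that by Proposition \ref{13oct2014-1} (with $k=\frac{\mov}{2}+1$; by Griffiths transversality and the identification \eqref{asabkhord} this Hodge block is exactly the one through which $\delta_k$ factors) one has
\[
\gma^{\frac{\mov}{2}-1,\frac{\mov}{2}}=\sum_{j=1}^{\tilde r}\gma_j\,dt_j ,
\]
where $\gma_j$ is the $a\times b$ matrix of $\delta_k(\partial/\partial t_j)$ in the bases $\varpi_*$ of $H^{\mov-k}(Y_t,\Omega^{k}_{Y_t})$ and $\omega_*$ of $H^{\mov-k+1}(Y_t,\Omega^{k-1}_{Y_t})$, with entries regular functions on $V$. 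The entries of $\alpha$ then involve only the $dt_j$, so at a point $(t,x)\in V\times\A^b_\C$ the covectors $\alpha_1(t,x),\dots,\alpha_a(t,x)$ lie in the span of $dt_1,\dots,dt_{\tilde r}$ and are the rows of the $a\times\tilde r$ matrix
\[
A(t,x)=\bigl[\,\gma_1\, x\ \mid\ \gma_2\, x\ \mid\ \cdots\ \mid\ \gma_{\tilde r}\, x\,\bigr].
\]

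The main step is the identification of the affine space carrying the coordinates $x=x^{\frac{\mov}{2}}$ with the source $H^{\mov-k+1}(Y_t,\Omega^{k-1}_{Y_t})^{*}$ of \eqref{IVHS-dual}, via the basis $\omega_*$ (for $k=\frac{\mov}{2}+1$ the space $H^{\mov-k+1}(\Omega^{k-1})$ is precisely the middle Hodge block, the one $\omega^{\frac{\mov}{2}}$ represents). Under this identification a one-line computation gives that, for fixed $t$, the homomorphism $\delta_k^{*}(x)\colon H^1(Y_t,T_{Y_t})_\theta\to H^{\mov-k}(Y_t,\Omega^{k}_{Y_t})^{*}$ of \eqref{IVHS-dual} — or, if \eqref{arezoyematoyito} is only surjective, its composition with the Kodaira--Spencer map — has matrix $A(t,x)$ in the vector fields $\partial/\partial t_j$ and the basis dual to $\varpi_*$: its $(i,j)$-entry is
\[
\Bigl\langle\, x,\ \delta_k\!\left(\frac{\partial}{\partial t_j}\right)(\varpi_{*,i})\,\Bigr\rangle=\Bigl\langle\, x,\ \sum_{m}(\gma_j)_{im}\,\omega_{*,m}\,\Bigr\rangle=\sum_{m}(\gma_j)_{im}\,x_m=\bigl(\gma_j\, x\bigr)_i .
\]
Hence $\rank\bigl(\delta_k^{*}(x)\bigr)=\rank\bigl(A(t,x)\bigr)$, which is the dimension of the $\C$-vector space spanned by $\alpha_1(t,x),\dots,\alpha_a(t,x)$; in particular the generic value of this rank over $V\times\A^b_\C$ is the integer $c$ of \S\ref{hanoozhich}, coinciding with the maximal rank in the image of \eqref{IVHS-2014} for generic $t$.

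To conclude I would invoke the elementary fact that a finite family of covectors spans a space of dimension $\le c-y$ exactly when all its $(c-y+1)$-fold exterior products vanish. Applied to $\alpha_1(t,x),\dots,\alpha_a(t,x)$ and compared with \eqref{mano-zagier}, this says that $\{(t,x)\in V\times\A^b_\C:\ x\ne 0,\ \rank(\delta_k^{*}(x))\le c-y\}$ is exactly the set whose Zariski closure is $\sinone_y$. Since $x\mapsto\delta_k^{*}(x)$ is linear for fixed $t$, the fibre $\{x:\rank(\delta_k^{*}(x))\le c-y\}$ is a Zariski-closed cone through the origin and thus equals the closure of its intersection with $\{x\ne 0\}$ (outside the degenerate case where it is $\{0\}$); hence $\sinone_y\cap(\{t\}\times\A^b_\C)$ is the rank-$\le(c-y)$ determinantal variety of \eqref{IVHS-dual} at $t$, and taking the union over $t\in V$ — equivalently, reading everything in the bundle ${\mathcal H}^{b,*}$ — gives the proposition. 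I expect no genuine obstacle in this argument: the only delicate points are bookkeeping — matching the Hodge block index $(\frac{\mov}{2}-1,\frac{\mov}{2})$ of \eqref{16oct2014} with the IVHS index $k=\frac{\mov}{2}+1$, and the identification of the $x$-space with the dual of the middle Hodge piece — and the gap between $\tilde r=\dim V$ and $r=\dim H^1(Y_t,T_{Y_t})_\theta$, which is immaterial because under the standing surjectivity of \eqref{arezoyematoyito} the column span of $A(t,x)$ is the image of $\delta_k^{*}(x)$ for any $\tilde r$.
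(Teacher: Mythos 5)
Your argument is correct and is essentially the paper's own proof: both identify the $a\times\tilde r$ matrix $[\gma_1x\mid\cdots\mid\gma_{\tilde r}x]$, whose $(c-y+1)$-minors (equivalently, the vanishing of the $(c-y+1)$-fold wedges of the $\alpha_i$) cut out $\sinone_y$, with the matrix of the homomorphism $\delta_k^*(x)$ of \eqref{IVHS-dual}, the paper phrasing this as the linearity identity $[\gma_1x,\ldots,\gma_{\tilde r}x]=\sum_j x_j[\gma_1^j,\ldots,\gma_{\tilde r}^j]$ and you as a direct entrywise pairing computation. Your extra remarks on the Zariski closure of the cone minus the origin and on the $\tilde r$ versus $r$ issue are sound refinements of points the paper leaves implicit.
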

\begin{proof}
Let $\gma=\gma^{k,k-1}$. 
 We have $\gma\cdot x= \sum_{j=1}^{\tilde r} (\gma_jx)dt_j$ and
 \begin{equation}
 \label{contr-conti}
 [\gma_1x,\gma_2x,\cdots, \gma_{\tilde r}x]= 
 \sum_{j=1}^b x_j[\gma_1^j,\gma_2^j,\ldots, \gma_{\tilde r}^j], 
 \end{equation}
 where $\gma_{j}^h$ is the $h$-th column of $\gma_j$.
 By definition $\sinone_y$ is the determinantal
 variety of matrices  
of rank $\leq c-y$ constructed from the left hand side of \eqref{contr-conti}.
 The determinantal variety of 
the right hand side of \eqref{contr-conti} is the determinantal variety of \eqref{IVHS-dual}.  
\end{proof}

\subsection{Proof of Theorem \ref{maintheo} using modular foliations}
Let $y+1:=c-s$. 
By our assumption the fiber of $\pi_{y+1}: \check \sinone_{y+1}\to V$ over $t$
is empty. The variety $\sinone_{y+1}$ is given by homogeneous polynomials in $x$ and with
coefficients in $\O_V$. 
Since projective varieties are complete, the image of $\pi_{y+1}$ is a 
closed proper subset of $V$ which does not contain $t$, see for instance Milne's lecture notes \cite{milneAG}. 
Let  $U\subset V$ be the complement  of the image of $\pi_{y+1}$ in $V$ and so $t\in U$.
By Proposition \ref{unknownguy} we know that a component of the Hodge locus $H$ in $U$ is the projection
of a component $L$ of the Hodge locus with constant periods  $L_{\delta_t}$ in $\T-\check\sinone_{y+1}$ and $\dim H=\dim L_{\delta_t}$.
Using Proposition \ref{28s2014} we get $\dim (H)\leq \tilde r-c+y$. Therefore, the codimension of $H$ in $V$ is $\geq s+1$.


\subsection{IVHS for hypersurfaces}  
Let $V$ be the parameter space of smooth  hypersurfaces of degree $d$ in $\Pn {\mov+1}$.
For $t\in V$ let 
$f_t(X_0,X_1,\ldots,X_{\mov+1})$ be 
the corresponding homogeneous 
polynomial, $Y_t:=\Pn {}\{f_t=0\}$.
We have the identifications
\begin{eqnarray} \label{12oct2014-1}
H_1(Y_t,T_{Y_t})_\theta & \cong & (\C[X]/J)_d\\ \label{12oct2014-2}
H^{\mov-k,k}    &\cong  &  (\C[X]/J)_{(k+1)d-\mov-2},\ \ k=0,1,\ldots,\mov
\end{eqnarray}
 where $J:= \jacob(f_t)$ is the Jacobian ideal of $f_t$ (for $k=\frac{\mov}{2}$ one must use the primitive part of $H^{\frac{\mov}{2},
 \frac{\mov}{2}}$).
 Note that we have changed the role
 of $k$ in \S\ref{16aug2014-2} with $\mov-k$. For $g$ in the right hand
 side of \eqref{12oct2014-1} the corresponding deformation of $f_t$ is given by $f_t+\epsilon g$ and for $g$
 in the right hand side of \eqref{12oct2014-2} we have 
 \begin{equation}
 \label{omidam}
 \omega_g:={\rm Residue}\left( 
 \frac{g\cdot \sum_{i=0}^{\mov+1}(-1)^i x_i\ dx_0\wedge \cdots \wedge \widehat dx_i\wedge \cdots\wedge dx_{\mov+1}}{f_t^{k+1}} \right)\in H^{\mov-k,k},
 \end{equation}
where we have used the residue map $ H^{\mov+1}_\dR(\Pn {\mov+1}\backslash Y_t)\to H_\dR^{\mov}(Y_t)$, see for instance \cite{gr69}. 
After these identifications, 
the map $\delta_k$ in \eqref{IVHS} turns out to be obtained by multiplication 
of polynomials, that is, we get
\begin{equation}
\label{08sep2014}
(\C[X]/J)_d\times (\C[X]/J)_{(k+1)d-\mov-2}\to (\C[X]/J)_{(k+2)d-\mov-2},\ \  (F,G)\mapsto FG,
\end{equation}
see \cite{CGGH1983, Harris85}.  Due to Hodge classes we are mainly interested in 
$\mov$ even and $k=\frac{\mov}{2}-1$. Recall the set $I_N$ in \eqref{21oct2014} and
let 
$I$ be the union of all $I_N$'s. We are going to work in a neighborhood
of the Fermat point $0\in V$ and so we take
\begin{equation}
\label{khodmidanad}
f_t:= X_0^{d}+X_1^{d}+\cdots+X_{\mov+1}^d-d\cdot \sum_{j \in I_d}t_j X^j ,\ \ \ 
\end{equation}
and work with 
$V=\spec\left(\C[t, \frac{1}{\Delta}]\right)$, where $t=(t_j)_{j\in I_d}$ and  
$\Delta(t)=0$ is the locus of parameters $t\in V$ such that
the monomials
\begin{equation}
\label{fermatbasis}
X^j:=X_0^{j_0}X_1^{j_1}\cdots X_{\mov+1}^{j_{\mov+1}},\ \ \  0\leq j_e\leq d-2
\end{equation}
do not form a basis of $\C[X]/J$.  
The matrices $\gma_j^{k,k-1}$ in \eqref{mikhaminoyejayibebaram} have entries in  $\C[t, \frac{1}{\Delta}]$.
The Kodaira-Spencer map \eqref{arezoyematoyito} in this case is an isomorphism of fiber bundles in $V$. In general, it is always useful
to choose a coordinate system  $(t_1,t_2,\ldots,t_r)$ such that the image of the vector field 
$\frac{\partial}{\partial t_j},\ \ j=1,2,\ldots, r$ under $(T_V)_t\to H^1(Y_t,T_{Y_t})_\theta$ form a basis 
of $H^1(Y_t,T_{Y_t})_\theta$.

\subsection{IVHS for the Fermat variety}  
\label{geryekardam}
Let us consider the case $t=0$, that is, we are going to deal with IVHS of the Fermat variety.
In this case, the map (\ref{20sep2014-1}) can be computed easily. A monomial $X^i\in \C[X]/J$ is zero
if and only if for some $e$, $i_e\geq d-1$. Therefore, for $(X^j,X^i)\in
(\C[X]/J)_d\times (\C[X]/J)_{(k+1)d-\mov-2}$, the product $X^{i+j}$
is a member of the canonical basis of $(\C[X]/J)_{(k+2)d-\mov-2}$ if 
$i_e+j_e\leq d-2$ for all $e$ and it is zero otherwise. 
From now on we use $I_N$ as an index set. Take variables 
$$
x_i,\ \ i\in I_{(k+2)d-\mov-2}.
$$
Assume that for any other $i$ which is not in  $I_{(k+2)d-\mov-2}$, $x_i$ by 
definition is zero. 
The matrix $\gma_j^{k,k-1},\ j\in I_d$ is therefore a $a\times b$-matrix with entries $0$ everywhere except at
$(i,i+j)$ entries which is one. Therefore, 
We have
\begin{equation} 
 \gma_j^{k,k-1} x=[x_{i+j}],\ \ j\in I_d,\ \ i \in I_{(k+1)d-\mov-2}
\end{equation}
and we can compute the $a\times r$ matrix:  
\begin{equation}
\label{lovelymatrix}
M^{k,k-1}:=
[\gma_1^{k,k-1}x,\gma_2^{k,k-1}x,\ldots,\gma_r^{k,k-1}x]=[x_{i+j}],\ \ \ j \in I_d,\ \ i \in I_{ (k+1)d-\mov-2}
\end{equation}
where $i$ counts the rows and $j$ counts the columns.
The differential forms $\alpha_j$ defined in \eqref{16oct2014} with $k=\frac{\mov}{2}-1$ 
and evaluated 
at the Fermat point $t=0$ are given by
$$
\alpha_i=\sum_{j\in I_d} x_{i+j}dt_j,\ \ \ i\in I_{(k+1)d-\mov-2}.
$$

\subsection{Proof of Theorem \ref{maintheo2}}
Throughout the present section set $k:=\frac{\mov}{2}-1$ for the content of \S\ref{geryekardam}.
We will reuse the letter $k$ for an element in $I_{ (\frac{\mov}{2}+1)d-\mov-2}$. 
We get the following Olympiad problem:
\begin{prop}
\label{yademadaram}
 If 
 \begin{equation}
 \label{happyandsad}
 \rank([x_{i+j}])<\bn{\frac{\mov}{2}+d}{d}-(\frac{\mov}{2}+1)^2
 \end{equation}
 then all $x_i, i\in I_{(\frac{\mov}{2}+1)d-(\mov+2)}$ are zero. 
\end{prop}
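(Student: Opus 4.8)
The plan is to analyze the matrix $M = [x_{i+j}]$, whose rows are indexed by $i \in I_{\frac{\mov}{2}d-\mov-2}$ and columns by $j \in I_d$, and to show that if its rank drops below $\bn{\frac{\mov}{2}+d}{d} - (\frac{\mov}{2}+1)^2 = r - (\frac{\mov}{2}+1)^2$, then all the variables $x_i$, $i \in I_{(\frac{\mov}{2}+1)d - \mov - 2}$, must vanish. The key structural observation is that the column index set $I_d$ contains a distinguished subset: the "monomials supported on few coordinates." In particular, the $(\frac{\mov}{2}+1)^2$ columns indexed by $d \cdot e_a + \cdots$-type exponents, or more precisely the columns coming from the subspace of $(\C[X]/J)_d$ corresponding to the Jacobian ring of a lower-dimensional Fermat variety, form a block whose deletion still leaves "enough" columns. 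So first I would isolate a subset $I_d' \subset I_d$ of size $r - (\frac{\mov}{2}+1)^2$ and argue that the rank bound forces the submatrix $M' = [x_{i+j}]_{i, j \in I_d'}$ to be singular, i.e. to have a nonzero kernel vector, or dually that some linear combination of rows vanishes.

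Next I would exploit the combinatorial rigidity of the addition map $(i,j) \mapsto i+j$ on exponent vectors with bounded coordinates. The crucial point is that $I_{\frac{\mov}{2}d-\mov-2} + I_d$ covers $I_{(\frac{\mov}{2}+1)d-\mov-2}$ with a lot of redundancy: each target exponent $k \in I_{(\frac{\mov}{2}+1)d-\mov-2}$ can be written as $i+j$ in many ways, and the incidence pattern is that of a (combinatorial) design rather than a sparse graph. I would use this to show that a nontrivial linear dependence among the rows of $M$ (or columns) is impossible unless the entries $x_k$ themselves satisfy an overdetermined linear system forcing $x_k = 0$ for all $k$. Concretely: suppose $\sum_i c_i x_{i+j} = 0$ for all $j$ in the relevant index set, with not all $c_i$ zero; pick a target exponent $k$ with $x_k \neq 0$ and trace back which pairs $(i,j)$ hit it, then use a neighboring $k'$ differing in two coordinates to propagate the relation, eventually contradicting genericity — except the statement says $x$ is arbitrary, so instead the argument must be that the $c_i$ can be chosen to "localize" and the resulting constraints are self-contradictory unless $x \equiv 0$.

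The main obstacle, and where the real work lies, is the precise bookkeeping of the $(\frac{\mov}{2}+1)^2$ correction term. This number equals $a \cdot b$ in miniature — it is $(\frac{\mov}{2}+1)^2 = (h^{\frac{\mov}{2}+1,\frac{\mov}{2}-1}_{\text{Fermat reduced}})$ ... more honestly, it is the dimension contributed by the "trivial" directions: the Fermat variety's automorphisms and the scaling of coordinates, i.e. the $(\frac{\mov}{2}+1)^2$ entries of a $(\frac{\mov}{2}+1) \times (\frac{\mov}{2}+1)$ matrix block that can always be made to have low rank by a diagonal/linear change. I would need to identify this block explicitly inside $M$, show it accounts for exactly a rank-$(\frac{\mov}{2}+1)^2$ deficiency in the worst case, and prove that any further rank drop is genuinely impossible. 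This requires the hypothesis $d \geq 2 + \frac{4}{\mov}$, which ensures $(\frac{\mov}{2}+1)d - \mov - 2 \leq (d-2)(\mov+2)$ with enough slack that the index set $I_{(\frac{\mov}{2}+1)d-\mov-2}$ is "wide" — without this, the target exponents get squeezed against the $i_e \leq d-2$ constraints and the redundancy in the addition map collapses. I expect the proof to proceed by a clean reduction: peel off the $(\frac{\mov}{2}+1)^2$-dimensional obstruction block, then invoke a purely combinatorial lemma (the "Olympiad problem" the author advertises) asserting that the complementary matrix has full rank whenever $x \neq 0$, proved by the propagation-of-relations argument sketched above.
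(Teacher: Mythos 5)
Your second paragraph correctly anticipates one of the two ingredients: the counting statement that every $k\in I_{(\frac{\mov}{2}+1)d-(\mov+2)}$ admits at least $\bn{\frac{\mov}{2}+d}{d}-(\frac{\mov}{2}+1)^2$ representations $k=i+j$ with $i\in I_{\frac{\mov}{2}d-\mov-2}$, $j\in I_d$. But the mechanism you propose for converting this redundancy into the vanishing of the $x_k$ --- tracing a hypothetical linear dependence among rows and ``propagating'' it to a contradiction --- is not an argument, and you concede as much mid-sentence (``except the statement says $x$ is arbitrary, so instead the argument must be that \dots''). The missing idea is elementary but essential: fix an \emph{additive} total order on exponent vectors (e.g.\ lexicographic) and run a descending induction on $k$. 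Assuming $x_{\tilde k}=0$ for all $\tilde k>k$, list the $f$ representations $k=i_e+j_e$ and order them by decreasing $i_e$; the resulting $f\times f$ submatrix of $[x_{i+j}]$ has $x_k$ on the diagonal and, by additivity of the order ($i_{e_p}>i_{e_q}$ forces $i_{e_p}+j_{e_q}>k$) together with the induction hypothesis, zeros above it, so its determinant is $x_k^{\,f}$. Since $f$ is at least the bound in \eqref{happyandsad}, the rank hypothesis annihilates this minor and gives $x_k=0$. Each $x_k$ is handled one at a time; no overdetermined linear system or design-theoretic incidence argument is needed.

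Second, your reading of the correction term $(\frac{\mov}{2}+1)^2$ is off, and the reduction you build on it would not get started. It is not an ``obstruction block'' tied to automorphisms or coordinate scalings, and $\bn{\frac{\mov}{2}+d}{d}$ is not $r=\# I_d$ (it counts all degree-$d$ monomials in $\frac{\mov}{2}+1$ variables, with no cap on the exponents), so deleting $(\frac{\mov}{2}+1)^2$ columns from $I_d$ does not produce a square submatrix of the advertised size; moreover the rows of $M$ are indexed by $I_{\frac{\mov}{2}d-\mov-2}$, not by $I_d$, so the matrix $[x_{i+j}]_{i,j\in I_d'}$ you write down is not a submatrix of $M$. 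The term arises purely combinatorially: the minimum of $\#A_k$ is attained at $k$ with $\frac{\mov}{2}+1$ coordinates equal to $d-2$ and the rest zero, where representations $k=i+j$ correspond to degree-$d$ monomials $j$ supported on the active coordinates, and exactly $(\frac{\mov}{2}+1)+(\frac{\mov}{2}+1)\frac{\mov}{2}=(\frac{\mov}{2}+1)^2$ of these have some exponent $\geq d-1$ and must be discarded. That this $k$ realizes the minimum is proved by a short injection $A_{(k_0-1,k_1+1,\cdots)}\to A_{(k_0,k_1,\cdots)}$ shifting $k$ toward the extremal shape; nothing about the Fermat automorphism group enters.
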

\begin{proof}
 Take any additive ordering for  $\N_0^{\mov+2}$, that is, 
 $i<j$ if and only if $i+k<j+k$ for all $i,j,k\in \N_0^{\mov+2}$. For instance take the lexicographical ordering.
 We use decreasing  induction on $k\in \Z^{\mov+2},\ \ |k|=(\frac{\mov}{2}+1)d-(\mov+2)$.
 For very big $k$, we have $k\not \in I_{(\frac{\mov}{2}+1)d-(\mov+2)}$ and so
 we have automatically $x_k=0$. Let us assume that $x_{\tilde k}=0$ for all $\tilde k >k$. We collect all
 $$
 k=i_e+j_e,\  i_e \in I_{ \frac{\mov}{2}d-\mov-2},\  j_e\in I_d,\ \ \ e=1,2,\cdots, f
 $$ 
 and order them
 according to the decreasing order of $i_e$'s, that is, $i_{e_1}>i_{e_2}>\cdots$.
 We find a $f\times f$-submatrix of $[x_{i+j}]$  which is lower triangular and in its diagonal
 we have only $x_k$. Therefore, its determinant is $x^f_k$. Now our proposition  follows from an even more
 elementary problem.
\end{proof}
\begin{prop}
 For any $k\in I_{(\frac{\mov}{2}+1)d-(\mov+2)}$ we have
 \begin{equation}
 \label{madaram}
 \#\left \{(i,j)\in I_{\frac{\mov}{2}d-(\mov+2)}\times I_d \mid k=i+j \right \}\ \ \ \geq \ \ \ 
 \bn{\frac{\mov}{2}+d}{d}-(\frac{\mov}{2}+1)^2
 \end{equation}
\end{prop}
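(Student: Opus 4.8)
The plan is to translate the stated inequality into a purely combinatorial minimization, to solve that minimization by a ``spreading'' argument, and finally to evaluate the minimum. First I would rewrite the left-hand side: since every $k\in I_{(\frac{\mov}{2}+1)d-(\mov+2)}$ has all coordinates in $\{0,1,\ldots,d-2\}$, putting $i=k-j$ shows that the two conditions $i\in I_{\frac{\mov}{2}d-(\mov+2)}$ and $j\in I_d$ are together equivalent to: $j\in\Z_{\ge 0}^{\mov+2}$, $\sum_e j_e=d$ and $j_e\le k_e$ for all $e$ (the bounds $i_e\le d-2$ and $j_e\le d-2$, and the sum condition on $i$, being then automatic). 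So the assertion becomes
\[
N(k):=\#\{\,j\in\Z_{\ge 0}^{\mov+2}\mid \textstyle\sum_e j_e=d,\ \ j_e\le k_e\ \ \forall e\,\}\ \ \geq\ \ \bn{\frac{\mov}{2}+d}{d}-(\frac{\mov}{2}+1)^2 .
\]
The cases $d\leq 2$ are vacuous, as then the right-hand side is $\leq 0$; so assume $d\geq 3$. Put $n:=\frac{\mov}{2}$, so there are $2n+2$ coordinates and $\sum_e k_e=(n+1)(d-2)$. I claim that $N(k)$ is minimized, over all admissible $k$, at the ``extremal'' configurations in which $n+1$ coordinates equal $d-2$ and the remaining $n+1$ equal $0$; evaluating $N$ there will yield exactly the right-hand side.

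The heart of the matter is a monotonicity lemma: if $e\neq f$, $k_e\leq k_f$, $k_e\geq 1$ and $k_f\leq d-3$, then $N(k-\mathbf 1_e+\mathbf 1_f)\leq N(k)$, where $\mathbf 1_e$ denotes the $e$-th standard basis vector. Granting the lemma, observe first that a non-extremal configuration must have at least two coordinates in $\{1,\ldots,d-3\}$: if $d=3$ every configuration is already extremal, while if $d\geq 4$ and only one coordinate, of value $v$, lay in $\{1,\ldots,d-3\}$ with the rest in $\{0,d-2\}$, then $\sum_e k_e=(n+1)(d-2)$ would force $v$ to be a multiple of $d-2$, hence $v\in\{0,d-2\}$, a contradiction. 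Choosing two such coordinates $e,f$ with $k_e\leq k_f$ and applying the lemma produces an admissible configuration with the same coordinate sum and with $N$ not increased, while the bounded integer-valued potential $\sum_e k_e^2$ strictly increases, by $2(k_f-k_e+1)\geq 2$. Iterating, after finitely many steps one reaches an extremal configuration, and $\sum_e k_e=(n+1)(d-2)$ then forces exactly $n+1$ of its coordinates to equal $d-2$; hence $N(k)$ is at least the value of $N$ at an extremal configuration.

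To prove the lemma I would restrict to a two-coordinate cross-section. For each fixed choice of $(j_g)_{g\neq e,f}$ with $\sum_{g\neq e,f}j_g=t$, the number of completions to a solution counted by $N(k)$ equals $g_{k_e,k_f}(d-t)$, where $g_{a,b}(m):=\#\{(u,v)\in\Z_{\ge 0}^2\mid u+v=m,\ u\le a,\ v\le b\}$; since the index set of those partial tuples does not involve the coordinates $e$ and $f$, summing over it reduces the lemma to the elementary inequality $g_{a-1,b+1}(m)\leq g_{a,b}(m)$ for every $m\in\Z$, valid whenever $1\leq a\leq b$. This is immediate from the ``trapezoidal'' profile of $g_{a,b}$ in the case $a\leq b$: it is supported on $[0,a+b]$, it equals $m+1$ on $[0,a]$, equals $a+1$ on $[a,b]$, and equals $a+b-m+1$ on $[b,a+b]$; passing from $(a,b)$ to $(a-1,b+1)$ merely lowers the plateau value from $a+1$ to $a$ over the interval $[a,b]$ and leaves every other value unchanged.

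Finally I would compute $N$ at an extremal configuration, say $k_0=\cdots=k_n=d-2$ and $k_{n+1}=\cdots=k_{2n+1}=0$. Then necessarily $j_{n+1}=\cdots=j_{2n+1}=0$, so $N$ counts the tuples $(j_0,\ldots,j_n)\in\Z_{\ge 0}^{n+1}$ with $\sum_{e=0}^n j_e=d$ and $j_e\le d-2$ for all $e$. By inclusion--exclusion on the constraints $j_e\le d-2$: there are $\bn{n+d}{n}$ nonnegative solutions of $\sum_{e=0}^n j_e=d$; for each fixed index $e_0$ exactly $\bn{n+1}{n}=n+1$ of them have $j_{e_0}\geq d-1$; and no two coordinates can be $\geq d-1$ simultaneously, since $2(d-1)>d$ for $d\geq 3$. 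Hence the value of $N$ at an extremal configuration is $\bn{n+d}{n}-(n+1)\cdot(n+1)=\bn{\frac{\mov}{2}+d}{d}-(\frac{\mov}{2}+1)^2$, which completes the proof. The one genuinely substantive step is the monotonicity lemma --- morally it says that $N$ is ``Schur-concave'' in the vector $k$, hence minimized at the most spread-out admissible configuration --- but once it is cut down to a two-coordinate slice its proof reduces to the completely elementary trapezoid comparison above, so I expect no real obstacle here beyond bookkeeping.
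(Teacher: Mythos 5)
Your proof is correct, and its skeleton coincides with the paper's: both arguments reduce to the extremal configurations ($\frac{\mov}{2}+1$ coordinates equal to $0$ and the rest equal to $d-2$) by iterating the local move $(k_e,k_f)\mapsto (k_e-1,k_f+1)$ for $k_e\le k_f$, showing the count does not increase, and then evaluating there. The difference is in how the one substantive step --- monotonicity under this move --- is established. The paper exhibits an explicit injection $A_{(k_0-1,k_1+1,\cdots)}\to A_{(k_0,k_1,\cdots)}$ on the pairs $(i,j)$ themselves, split into the cases $i_1\neq 0$ and $i_1=0$, which takes some unwinding to verify. You instead first eliminate $i$ entirely (rewriting the count as the number of $j$ with $\sum_e j_e=d$ and $j_e\le k_e$), fiber over the coordinates other than $e,f$, and reduce to the comparison of the trapezoidal profiles of $g_{a,b}$ and $g_{a-1,b+1}$; this makes the inequality transparent and even quantifies the drop. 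You also supply two points the paper leaves implicit: the termination of the iteration (via the potential $\sum_e k_e^2$ together with the observation that a non-extremal admissible $k$ has at least two coordinates strictly between $0$ and $d-2$, so the move is always available), and the inclusion--exclusion computation showing the extremal value is exactly $\bn{\frac{\mov}{2}+d}{d}-(\frac{\mov}{2}+1)^2$, which the paper asserts as ``easy to see''. In short: same route, but your version of the key lemma is more elementary and your write-up closes the gaps the paper glosses over.
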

\begin{proof}
 Let $A_k$ be the set in the  left hand side of \eqref{madaram} for $k=(k_0,k_1,\ldots,k_{\mov+1})$.
 It is easy to see that the lower bound in \eqref{madaram} is
 obtained by elements $k$ such that $\frac{\mov}{2}+1$ number of $k_e$'s are zero and the rest (exactly the next half)
 is $d-2$.  Let us assume that $k$ is not of the mentioned format. For simplicity we can assume that
 $0<k_0<k_1<d-2$. We prove that 
 \begin{equation}
 \label{yekeshab}
 \#  A_{(k_0,k_1,\cdots)}\geq \#A_{(k_0-1,k_1+1,\cdots)}
 \end{equation}
 and so repeating the same argument for $(k_0-1,k_1+1,\cdots)$ we get an element with only $d-2$ and $0$ as its entries. 
 In order to prove \eqref{yekeshab}  we define a map
 $$
 A_{(k_0-1,k_1+1,\cdots)}\to A_{(k_0,k_1,\cdots)}
 $$ 
 and prove that it is injective. It sends the pair  \(\left((i_0,i_1,\ldots),(j_0,j_1,\ldots)\right )\) to the pair  
 $((i_0+1,i_1-1,\ldots)$, $(j_0,j_1,\ldots))$ if \(i_1\not=0\) and to \(\left((0,i_0,\ldots),(k_0,k_1-i_0,\ldots)\right)\) 
 if \(i_1=0\). 
 It is easy to see that this map is  injective and so \eqref{yekeshab} is valid. 
\end{proof}

In Proposition \ref{yademadaram} the number in the right
 hand side of \eqref{happyandsad} is the biggest one with such a  property. It
 is enough to find complex numbers $x_k$ such that $\rank ([x_{i+j}])$ is the number
 in the right hand side of \eqref{happyandsad}. Such numbers are the periods of the projective space
 $\Pn {\frac{\mov}{2}}$ inside the Fermat variety $V_0$ given by $x_0-\zeta x_1=x_2-\zeta x_3=\cdots=x_{\mov}-\zeta x_{\mov+1}=0$, where 
 $\zeta^d+1=0$. More precisely
 $$
 x_k:=\int_{\Pn {\frac{\mov}{2}}} \omega_{g_k}, 
 $$
 where $g_k:=X_0^{k_0}X_1^{k_1}\cdots X_{\mov+1}^{k_{\mov+1}}$ and $\omega_{g_k}$ is defined in \eqref{omidam}.


\subsection{Proof of Theorem \ref{maintheo3}}
 \label{tanhayetanha}
We compute the first approximation of IVHS
 around the Fermat point. 
 More precisely, we compute the image of the Zariski tangent space of $\sinone_{c-s}$ 
 at each point $(0,x)$ and under the derivation of the projection map $\pi: \sinone_{c-s}\to V$.
 We prove  that for any $s\leq s_{\rm max}^1$ and $(0,x)\in\pi^{-1}(0),\ x\not=0$, 
 the derivation of $\pi$, which maps the Zariski tangent space of $\sinone_{c-s}$ at $(0,x)$
 to the Zariski tangent space of $V$ at $0$, is not surjective. This implies that $\pi$ is not dominant. 
 Note that the fibers of $\pi$ are given by homogeneous polynomials and so one may take
 their projectivization. 

 Let us consider the IVHS \eqref{08sep2014} for the polynomial $f_t$ in \eqref{khodmidanad} with
 fixed parameters $t=(t_\alpha)_{\alpha\in I_d}$ and arbitrary $k$. 
 We need to emphasize that the matrix $\gma^{k,k-1}$ depends on $t$ and so
 we write $\gma^{k,k-1}(t):=\gma^{k,k-1}$. In this way, for fixed $j\in I_d$, 
 $\gma_j^{k,k-1}(t)$ is the $a\times b$ matrix of the IVHS \eqref{08sep2014}  
 written in the basis \eqref{fermatbasis}.
 We write the Taylor series of the matrix $\gma_j^{k,k-1}(t)$ in the variables 
 $t_\alpha,\ \alpha\in I_d$:
 \begin{equation}
  \gma_j^{k,k-1}(t)=\gma_j^{k,k-1}(0)+\sum_{\alpha\in I_d}    \gma_{j,\alpha}^{k,k-1}(0)\cdot    t_\alpha+\cdots
 \end{equation}
 where $\cdots$ means sum of  homogeneous polynomials of degree $\geq 2$ in $t_\alpha$'s 
 and with coefficients depending on $x$.
 Let $K$ be the ideal of $\C[t_\alpha, \ \alpha\in S_d]$ generated by 
 $t_\alpha t_\beta,\ \ \alpha,\beta \in S_d$. 
 Therefore, $\cdots$ in the above equality 
 means it belong to $K$. 
 As we mentioned in \S\ref{geryekardam}, $\gma_j^{k,k-1}(0)$ is a $a\times b$ matrix with $1$ in 
 its $(i,i+j)$ entries and $0$ for entries elsewhere.
 In order to compute $\gma_{j,\alpha}^{k,k-1}(0)$ we notice that
 \begin{equation}
 \label{sili}
X^iX^j = \left\{ 
\begin{array}{ll}
         X^{i+j} & \forall e\ \  i_e+j_e\leq d-2, \\
         \sum_{\alpha\in I_d} t_\alpha \cdot  \alpha_{\check e} \cdot \frac{X^{i+j+\alpha}}{X_{\check e}^d}  &  \forall e\ \  i_e+j_e\leq d-2, 
         \hbox{ except for exactly one } e=\check e,  \\
         0  & \hbox{ otherwise.}
         
         \end{array} \right.
\end{equation}
The last two equalities are written modulo both ideals $\jacob(f_t)$ and $K$. From this 
we derive the fact that the matrix 
$\gma_{j,\alpha}^{k,k-1}(0)$ in its $(i, i+_\alpha j)$ entry has $\alpha_{\check e}$ for those $(i,j)$
in the second equality in \eqref{sili}, and elsewhere entries are zero.
We define an $a\times r$ matrix
$\check M_\alpha^{k,k-1}$ in the following way. 
For $(i,j)$ in the second equality  of \eqref{sili}, 
the $(i,j)$ entry of $\check M_\alpha^{k.k-1}$ is
$\alpha_{\check e}\cdot x_{i+_\alpha j }$, and elsewhere entries  are zero.
In this way, $\gma_{j,\alpha}^{k,k-1}(0)\cdot x$ is the $j$-th column of $M^{k,k-1}_{\alpha}$.
Let  $N_{j,\alpha}^{k,k-1}$ be the $a\times r$ matrix obtained 
by replacing the $j$-th column of $M^{k,k-1}$ with the $j$-th column of $\check M_\alpha^{k,k-1}$.
 
From now on, we set $k=\frac{\mov}{2}-1$, we do not write
the $k,k-1$ upper index of our matrices and we have the same notation as in the Introduction.  
 The variety $\sinone_{c-s}$ is given by $(s+1)\times (s+1)$ minors of the $a\times r$ matrix 
$$
\left [\gma_1^{k,k-1}(t)\cdot x\ ,\ \gma_2^{k,k-1}(t)\cdot x\ ,\ \ldots\ ,\ \gma_r^{k,k-1}(t)\cdot x \right ]
$$
and so the Zariski tangent space of $\sinone_{c-s}$ 
under the derivation of $\pi$
maps to
$$
\left\{ (v_\alpha)_{\alpha\in S_d} \ \ \mid\ \  \sum_{\alpha\in S_d} 
\left( \sum_{j\in I_d} {\rm minor}_{s+1}(N_{j,\alpha}^{k,k-1})  \right )v_\alpha=0 \right\}.
$$
Now, we use the  definition of $s_{\rm max}^1$ and Theorem \ref{maintheo3} is proved.

\subsection{Noether-Lefschetz locus}
\label{NLlocus}
Max Noether's theorem asserts that every curve on a general
hypersurface $X$ of degree $d\geq 4$ in $\Pn 3$ is a complete
intersection with another surface. In other words,
the Picard group  
of $X$ is free of rank one. 
Noether's argument was just the
plausibility of the statement and the first rigorous proof of this
theorem was given by S. Lefschetz in \cite{lefschetz1924} by using a
monodromy argument. A new Hodge-theoretic proof is given by
Carlson, Green, Griffiths and Harris in \cite{CGGH1983, GH1985}.
Let $V\subset \Pn {N}$ be the parameter space of smooth hypersurface $X$ of degree $d\geq 4$ in 
$\Pn 3$.
Noether-Lefschetz locus $\NLL_d$ in $V$ is the locus of smooth surfaces with Picard group 
different from $\Z$. By Lefschetz $(1,1)$ theorem any two dimensional Hodge class in $X$ is algebraic 
and so $\NLL_d$ is a particular example of a Hodge locus.  
 It is a countable
union of proper algebraic subset of $V$ and a surface with parameters outside $\NLL_d$
is called a general surfaces. Let $H$ be an irreducible component of $\NLL_d$ and $\codim_V(H)$ 
be its codimension in $V$.
We have  
$$
d-3\leq \codim_V(H)\leq \bn{d-1}{3}
$$
where the upper bound is the $(2,0)$ Hodge number $\hn^{20}$ of $X$.  
Ciliberto, Harris and Miranda in \cite{CHM88} proved that for $d\geq 4,$ 
$\NLL_d$
contains infinitely many general components, that is those components $H$ such that $\codim_V(H)=\hn^{20}$, 
and  the union of these components is Zariski dense in $V$. 
Components of codimensions $<\hn^{20}$ are called special (or exceptional). 
Green and Voisin in a series of paper showed that $d-3$ is the minimum codimension for
the components of $\NLL_d$ and for $d\geq 5$ the only component of codimension $d-3$ is the 
family of surfaces
containing a line, see \cite{green1988, green1989, voisin1988}. Voisin in \cite{voisin89} showed 
that for $d\geq 5$ the second biggest component of 
$\NLL_d$ is of codimension $2d-7$, which consists of those surfaces containing a conic.  
This implies a conjecture of J. Harris is true in the case $d=5$: 
there should be only finitely many special components of  $\NLL_d$. 
More evidences for Harris' conjecture came from the work   
\cite{cox90} of Cox in the case of elliptic surfaces, Debarre and Laszlo's work \cite{DL1990} 
for abelian varieties  and Voisin's work \cite{voisin90} for hyepersurfaces 
of degree $d=6,7$. For $d=4s$ arbitrarily large, Voisin in \cite{voisin1991} 
constructed an infinity of special components for $\NLL_d$ and hence gave counterexamples to 
Harris' conjecture. All such components are 
contained in a proper algebraic subvariety of the set of algebraic surfaces of degree $d$ 
and so she formulated Conjecture \ref{17july2014}. In this case the numbers \eqref{contradiction90} are given by
\begin{eqnarray}
 \label{vasacomer}
c=a&=&\bn{d-1}{3}\\ \nonumber
b &=& (d-1)^3-(d-1)^2+(d-1)-2\bn{d-1}{3}\\ \nonumber
r&=&\bn{d+3}{3}-16.
\end{eqnarray}
General components are of codimension $a$ and so using Proposition \ref{unknownguy} 
the modular foliation $\F$ in $\T:=V\times \A_\C^{a+b}$ is of the maximal codimension $a$. 
The differential forms \eqref{7aug2014}
are linearly independent over the function field of $\T$ and so $c=a$ and  $\sinone_1$ is the Zariski 
closure of 
\begin{equation}
\label{10s2014}
\left\{(t,x)\in V\times \A^ {b}_\C \mid x\not=0,\ \ \ \alpha_{1}\wedge\alpha_{2}\wedge\cdots\wedge \alpha_{a}=0\ \right  \}.
\end{equation}

\subsection{K3 surfaces}
Let us consider the case  of hypersurfaces  $X$ of degree $4$ in $\Pn 3$, that is,  $\mov=d-2=2$.
In this case $X$  is called a K3 surface. We have canonical identifications
$H^{2,0}\cong\C,\ H_1(Y_t,T_{Y_t})_\theta\cong H^{11}_{\rm prim}\cong (\C[X]/J)_d $.  We choose a basis of 
$(\C[X]/J)_d$ and hence we obtain a basis of both $H_1(Y_t,T_{Y_t})_\theta$ and 
$H^{11}_{\rm prim}$. Using  (\ref{08sep2014}) we get
$$
\gma^{0,1}=[dt_1,dt_2,\ldots,dt_b],\ \ \alpha_1:=x_1\cdot dt_1+x_2\cdot dt_2+\cdots+x_b\cdot dt_b.
$$
We conclude that  the variety $\sinone_1$  is empty.
The singular set of the modular foliation $\F$ in $\T$ is given by $\sintwo$. An expert
in holomorphic foliations might be interested to know whether the components of the
Noether-Lefschetz locus with constant periods, are the only algebraic leaves of $\F$. 
A similar discussion is also valid for the case of four dimensional cubic hypersurfaces, that is, $d=3, \mov=4$.
In this case $a=1, b=r=20$. Note that in this case the Hodge conjecture is well-known, see \cite{zu77}.

\subsection{Conjecture \ref{paperyounes} and the proof of Corollary \ref{estamuydirty}}
\label{conjsection}
We know that the codimension of the determinantal variety $D_{s,t}$ of homomorphisms of rank $\leq s$ 
in the right hand side of  \eqref{IVHS-2014} is 
$(a-s)(r-s)$. Therefore, we may hope that for generic $t$ 
the map \eqref{IVHS-2014} is transversal to $D_{s,t}$ (it meets  $D_{s,t}$ properly in the terminology of 
\cite{eisenbud1988}). 
If this happens then 
the codimension of the pull-back $W_{c-s,t}$ of $D_{s,t}$ in the left hand side
of \eqref{IVHS-2014}
is the same as the codimension of $D_{t,s}$ in the right hand side of \eqref{IVHS-2014}. 
Here,  the codimension of the  empty set  is defined to be any number bigger than the dimension of the ambient space. 
Therefore, 
the projectivization of the map \eqref{IVHS-2014} does not intersect $D_{s,t}$ if 
\begin{equation}
\label{29sept2014}
b\leq  (a-s)(r-s)=s^2-(r+a)s+ar.
\end{equation}
The biggest $s$ which satisfies this property is $\check s_{\rm max}$ defined in \eqref{30sept2014}.
Corollary \ref{paperyounes} follows from the computation of $\check s_{\rm max}$ from the data in \eqref{vasacomer}. 


\subsection{Second proof of Theorem \ref{maintheo} }
\label{popped}
Since determinantal varieties are homogeneous and projective varieties are complete, 
there is a Zariski open neighborhood $U$ of $t\in V$ such that the hypothesis of Theorem \ref{maintheo}
is valid for all points in $U$ and so  it is enough to prove that a component $H$ of the Hodge locus
passing through $t$ has codimension $\geq s+1$.

First, note that the algebraic cup product in de Rham cohomology \eqref{17f1393} after canonical identifications \eqref{asabkhord} 
gives  us isomorphisms
\begin{equation}
\label{delshekaste}
 H^{\mov-k}(Y_t,\Omega_{Y_t}^{k})^*\cong   H^{k}(Y_t,\Omega_{Y_t}^{\mov-k}),\ \ \ k=0,1,\ldots,\mov
\end{equation}
and under these isomorphisms, the map \eqref{IVHS-2014} constructed from the $(\mov-k)$-th
IVHS is identified with the $k$-th IVHS: 
\begin{equation}
\label{badankoofte}
H^{k-1 }(Y_t,\Omega_{Y_t}^{\mov-k+1})\to {\rm Hom} \left (H^1(Y_t,T_{Y_t})_\theta,  
H^{k}(Y_t,\Omega_{Y_t}^{\mov-k})  \right ).
\end{equation}
Composing the right hand side of \eqref{badankoofte} with the Kodaira-Spencer map \eqref{arezoyematoyito}
one arrives at Voisin's $^t\bar\nabla$ map:
\begin{equation}
\label{badankoofte-1}
^t\bar\nabla: H^{k-1 }(Y_t,\Omega_{Y_t}^{\mov-k+1})\to {\rm Hom} \left ((T_V)_t,  
H^{k}(Y_t,\Omega_{Y_t}^{\mov-k})  \right ).
\end{equation}
Now consider the case $k=\frac{\mov}{2}+1$.
Let $\delta_t\in H_\dR^\mov(Y_t)$
be the Hodge class whose locus  is the component $H$. It induces an element $\delta^{\frac{\mov}{2},\frac{\mov}{2}}_t\in  
H^{k-1 }(Y_t,\Omega_{Y_t}^{\mov-k+1}) $ and Voisin in \cite{vo03} 5.3.3 has shown that $\ker( ^t\bar\nabla \delta^{\frac{\mov}{2},\frac{\mov}{2}}_t )$
is the Zariski tangent space of $H$ at $t$. Therefore
$$
\codim_V H\geq \dim V-\dim \left( \ker( ^t\bar\nabla \delta^{\frac{\mov}{2},\frac{\mov}{2}}_t )\right)\geq 
\rank\left ( ^t\bar \nabla \delta^{\frac{\mov}{2},\frac{\mov}{2}}_t\right ) \geq s+1.
$$

\section{Final remarks}
\label{19aug2014}

We can use \eqref{august2014} and we can interpret  
the variety $\T$ defined in \S\ref{08a2014} in the following way: 
Let $\tilde\T$ be the total space of the 
$F^{\frac{\mov}{2}}$ bundle mines the total space of $F^{\frac{\mov}{2}+1}$ bundle. 
In $\tilde\T$ we have a canonical line bundle $L$ 
obtained by the choice of an element in $F^{\frac{\mov}{2}}$. Let us denote by $F^{i}$
the pull-back of the bundles with the same name by the projection $\tilde\T\to V$. 
For $i\leq \frac{\mov}{2}$ we have a canonical embedding 
$L\subset F^i$ and we define
$\tilde F^i=F^i/L$. For other cases we define $\tilde F^i=F^i$.
 The variety $\T$ is the total space of choices of bases for 
 $F^i$'s and $L$  with a fixed variation in all $\tilde F^i$'s.  
 These kind of total spaces in a refined format of  moduli spaces
were extensively used by the author to give geometric interpretation of quasi-modular 
forms and to construct analytic objects which transcend the classical automorphic forms, 
see \cite{ho18} and the references therein. The notion of modular foliations based on the
historical examples of Darboux, Halphen and Ramanujan has been introduced by the author in 
\cite{ho06-1}.

A systematic solution to the  Harris-Voisin conjecture and its generalizations involves  the  
study of the Hodge block  $\gma^{\frac{\mov}{2}, \frac{\mov}{2}}$
of the Gauss-Manin connection used in the algebraic expression of the foliation $\F$ given in  
\eqref{7aug2014}, \eqref{7aug2014-2}, \eqref{7aug2014-1}. Note that this is not covered in the IVHS. 
This together with two other matrices computed from  IVHS, namely $\gma^{\frac{\mov}{2}-1, \frac{\mov}{2}}, \gma^{\frac{\mov}{2}, \frac{\mov}{2}+1}$,
govern the codimensions of the leaves of $\F$. 

We know that the number $c$ is less than or equal the minimum of $r$ and $a$. In the case of hypersurfaces of dimension two
we saw that $c=a<r$. 
For hypersurfaces of dimension $\geq 4$ and degree $d\geq 2\frac{\mov+2}{\mov-2}$ we have $r\leq a$ and one may conjecture that 
$c=r$. For the $r$-dimensional parameter space in \eqref{khodmidanad}, this implies that
the foliation $\F$ is  zero dimensional 
and so a generic hypersurface has isolated Hodge classes with constant periods.
However, this does not imply the following stronger statement: 
\begin{conj}
There is a Zariski open subset $U$ of the parameter space $V$ of smooth hypersurfaces of degree $d\geq 2\frac{\mov+2}{\mov-2}$ in $\Pn {\mov+1}, \ \mov\geq 4$
such that  Hodge classes of $Y_t,\ t\in U$ are isolated, that is, all the components of the Hodge locus in $U$  are the orbits
of $\rm PGL(\mov+2,\C)$ acting on $V$. 
\end{conj}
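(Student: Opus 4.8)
A natural strategy proceeds in three stages: reduce the conjecture, via the ${\rm PGL}(\mov+2,\C)$-equivariance of the Hodge locus, to a codimension bound; observe that this bound provably cannot come from IVHS alone; and then obtain it from a refinement of the modular-foliation analysis of \S\ref{foliationsection} that incorporates the Hodge block $\gma^{\frac{\mov}{2},\frac{\mov}{2}}$ of the Gauss--Manin connection --- the piece which, as noted at the end of \S\ref{19aug2014}, is invisible to IVHS. For the reduction: the locus $\HL_\mov(Y/V)$ is closed and ${\rm PGL}(\mov+2,\C)$-invariant, so every component $H$ is a union of orbits; since for $d\geq 3$ a general hypersurface has trivial projective automorphism group, the generic orbit in $V$ has codimension $r=\# I_d=\dim V-\big((\mov+2)^2-1\big)$. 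Hence any $H$ meeting the open automorphism-free locus has $\codim_V H\leq r$, with equality exactly when $H$ is the closure of a single orbit. So the conjecture is equivalent to the assertion that, for $\mov\geq 4$ and $d\geq 2\frac{\mov+2}{\mov-2}$, the union of all components $H$ with $\codim_V H<r$ is not Zariski dense in $V$ (then $U$ is the complement of its closure) --- the $\mov$-dimensional analogue of the Harris--Voisin conjecture.

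Next I would spell out why IVHS does not suffice. Applying Theorem \ref{maintheo} with $s=r-1$ would finish the proof, but for hypersurfaces the map \eqref{IVHS-2014} is the multiplication $(\C[X]/J)_{\sigma/2}\to {\rm Hom}\big((\C[X]/J)_d,\ (\C[X]/J)_{\sigma/2+d}\big)$ with $\sigma:=(d-2)(\mov+2)$, and for a generic $F$ the rank of multiplication by $F$ from degree $d$ to degree $\sigma/2+d$ already drops below $\# I_d$; indeed $s_{\rm max}\leq\check s_{\rm max}<r-1$ already for $(\mov,d)=(4,6)$, so the image of \eqref{IVHS-2014} meets $D_{r-1,t}$ for every $t$. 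Equivalently, the foliation $\F$ has positive-dimensional special leaves inside $\check\sinone_1$, which a priori are allowed to project onto components of codimension $<r$. The task is therefore to rule this out, outside a proper closed subset of $V$, unless the leaf projects to a ${\rm PGL}$-orbit.

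The mechanism I would use is that a component $L$ of the Hodge locus with constant periods that lies in $\check\sinone_1$ is more than a leaf of $\F$: it is an $L_{\delta_t}$ as in \eqref{16aug2014} for a \emph{rational} class $\delta_t\in H^\mov(\X_t,\Q)$, so along it one also has \eqref{7aug2014-2}, $dx^{\frac{\mov}{2}}=\gma^{\frac{\mov}{2},\frac{\mov}{2}}x^{\frac{\mov}{2}}+\gma^{\frac{\mov}{2},\frac{\mov}{2}+1}x^{\frac{\mov}{2}+1}$, with $x^{\frac{\mov}{2}+1}$ subject to \eqref{7aug2014-1}. I would unwind these equations to describe how the class $[\delta_t]$ in $\mathbb{P}(F^{\frac{\mov}{2}})$ varies along $L$, and show that if $\dim L>(\mov+2)^2-1$ this variation --- governed by $\gma^{\frac{\mov}{2},\frac{\mov}{2}}$ --- is incompatible with $\delta_t$ staying a flat rational class, contradicting the algebraicity of components of the Hodge locus \cite{cadeka} unless the residual indeterminacy is absorbed by the ${\rm PGL}$-action. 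On $V$ this amounts to showing that the Zariski tangent space of $H$ at a general point, computed with the full Gauss--Manin connection one order past Voisin's $^t\bar\nabla$ of \eqref{badankoofte-1} --- essentially the second-order operator of \cite{Maclean05}, Theorem 7, now fed by $\gma^{\frac{\mov}{2},\frac{\mov}{2}}$ --- is forced down to $\dim{\rm PGL}(\mov+2)$.

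Since the Fermat point is provably insufficient --- the rank of $M=[x_{i+j}]$ there can drop to $\bn{\frac{\mov}{2}+d}{d}-(\frac{\mov}{2}+1)^2<\# I_d$ --- this argument must be run either at a generic $t$, which needs control of the whole connection matrix $\gma$ near the generic point, or, following \S\ref{tanhayetanha}, as a first-order expansion around the Fermat point that now tracks $\gma^{\frac{\mov}{2},\frac{\mov}{2}}$ alongside $\gma^{\frac{\mov}{2}-1,\frac{\mov}{2}}$, reducing everything to an explicit determinantal/combinatorial condition on matrices built from the $x_i$'s together with new middle-block matrices, to be settled by Gr\"obner methods in the spirit of $s_{\rm max}^1$. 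The hard part will be exactly this last point: at present there is no closed-form description of $\gma^{\frac{\mov}{2},\frac{\mov}{2}}$ (or of its Fermat-point linearization) comparable to the catalecticant matrix $M=[x_{i+j}]$ available for the IVHS blocks, and --- as Voisin and Maclean already found for its first-order shadow --- making it computationally tractable, uniformly in $(\mov,d)$, appears extremely delicate.
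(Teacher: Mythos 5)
This statement is one of the open conjectures in \S\ref{19aug2014}; the paper offers no proof of it, and in fact explicitly records that Theorem \ref{maintheo} (i.e., IVHS alone) cannot establish it, since even under Conjecture \ref{paperyounes} the bound $\check s_{\rm max}+1$ of \eqref{30sept2014} never reaches $r$. Your first two stages are consistent with what the paper itself says: the reduction via ${\rm PGL}(\mov+2,\C)$-equivariance to the assertion that the union of components of codimension $<r$ is not Zariski dense, and the observation that the IVHS determinantal bound provably falls short, are both correct and match the paper's framing of this conjecture as the higher-dimensional analogue of Conjecture \ref{17july2014}.

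The genuine gap is your third stage, which is where all of the mathematical content lies. You assert that if a component $L$ of the Hodge locus with constant periods has $\dim L>(\mov+2)^2-1$, then the variation prescribed by \eqref{7aug2014-2} is ``incompatible with $\delta_t$ staying a flat rational class''; no mechanism for this incompatibility is given, and the claim is essentially a restatement of the conjecture. There is also a structural problem with the way you invoke \eqref{7aug2014-2}: that equation and \eqref{7aug2014-1} impose no condition on the projection of a leaf to $V$ --- they only determine how the fibre coordinates $x^i$ propagate along it --- so ``also having \eqref{7aug2014-2} along $L$'' yields no extra codimension by itself (compare Proposition \ref{28s2014}, where only the forms \eqref{7aug2014} cut down dimension, and the discussion of special leaves inside $\check\sinone_1$, which are exactly the loci your argument must exclude). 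The missing input would have to come either from the rationality of $\delta_t$, a transcendental condition the foliation equations do not see, or from an explicit computation of $\gma^{\frac{\mov}{2},\frac{\mov}{2}}$ (or its linearization at the Fermat point) of the kind you gesture at via \cite{Maclean05} --- and you acknowledge yourself that no such closed form is available. As it stands the proposal is a plausible research programme, closely aligned with the route the paper itself suggests in \S\ref{19aug2014}, but it is not a proof.
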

Note that IVHS, and hence Theorem
\ref{maintheo}, is not enough
for verifying this conjecture as we need to study the Hodge block 
$\gma^{\frac{\mov}{2}, \frac{\mov}{2}}$ of the Gauss-Manin connection. 
Even if Conjecture
\ref{paperyounes} is true, the number $\check s_{\rm max}+1$ defined in \eqref{30sept2014} cannot 
be $\geq r$.

\href{http://w3.impa.br/~hossein/WikiHossein/files/Singular%20Codes/2014-10-GMCD-NL.txt}
{
We were not able to verify the hypothesis of Theorem \ref{maintheo} for some examples of hypersurfaces 
and with  the help of a computer 
code which uses Gr\"obner basis for ideals.} Even in the case of the Fermat variety, where
 we were  able to compute IVHS and prove Proposition \ref{yademadaram} all without any computer 
 assistance,
 writing a simple minded code to prove Proposition \ref{yademadaram} fails.
Such computational difficulties deserve
to be treated in a separate work. There are few examples which one might be able to  treat them 
by hand. One of them is the famous  Dwork family of Calabi-Yau varieties:
\begin{equation}
 X_0^{\mov+2}+X_1^{\mov+2}+\cdots+X_{\mov+1}^{\mov+2}-t\cdot X_0X_1\cdots X_{\mov+1}=0
\end{equation}
It is left to the reader to analyze the hypothesis of Theorem \ref{maintheo} in this case and
get results similar to Theorem \ref{maintheo2}.

The degree of the components of the Noether-Lefschetz locus in the case of K3 surfaces is related 
to Gromov-Witten theory, see for instance \cite{MaulikPandharipande}. It would be interesting
to know whether  such relations can be generalized beyond K3 surfaces. 
The degree of the component of the Hodge locus
in Theorem \ref{maintheo2} for $\mov=4$  can be
computed explicitly, see for instance Vainsencher's article \cite{Vainsencher}.

For the entire collection of problems, computations and forthcoming articles related 
to Gauss-Manin connection in disguise
the reader is referred to  the author's web page. The case of Calabi-Yau varieties and its application
in Topological String Theory can be found in \cite{HosseinMurad}.


\begin{thebibliography}{CGGH83}

\bibitem[AMSY14]{HosseinMurad}
Murad Alim, Hossein Movasati, Emanuel Scheidegger, and Shing-Tung Yau.
\newblock Gauss-{M}anin connection in disguise: {C}alabi-{Y}au threefolds.
\newblock {\em arXiv:1410.1889}, 2014.

\bibitem[CDK95]{cadeka}
Eduardo~H. Cattani, Pierre Deligne, and Aroldo~G. Kaplan.
\newblock On the locus of {H}odge classes.
\newblock {\em J. Amer. Math. Soc.}, 8(2):483--506, 1995.

\bibitem[CGGH83]{CGGH1983}
James Carlson, Mark Green, Phillip Griffiths, and Joe Harris.
\newblock Infinitesimal variations of {H}odge structure. {I, II,III}.
\newblock {\em Compositio Math.}, 50(2-3):109--205, 207Ð265, 267Ð324, 1983.

\bibitem[CHM88]{CHM88}
Ciro Ciliberto, Joe Harris, and Rick Miranda.
\newblock General components of the {N}oether-{L}efschetz locus and their
  density in the space of all surfaces.
\newblock {\em Math. Ann.}, 282(4):667--680, 1988.

\bibitem[Cox90]{cox90}
David~A. Cox.
\newblock The {N}oether-{L}efschetz locus of regular elliptic surfaces with
  section and {$p_g\ge 2$}.
\newblock {\em Amer. J. Math.}, 112(2):289--329, 1990.

\bibitem[Del09]{del-letter}
Pierre Deligne.
\newblock Private letter.
\newblock {\em
  http://w3.impa.br/$\sim$hossein/myarticles/deligne6-12-2008.pdf}, 2009.

\bibitem[DL90]{DL1990}
Olivier Debarre and Yves Laszlo.
\newblock Le lieu de {N}oether-{L}efschetz pour les vari\'et\'es ab\'eliennes.
\newblock {\em C. R. Acad. Sci. Paris S\'er. I Math.}, 311(6):337--340, 1990.

\bibitem[Eis88]{eisenbud1988}
David Eisenbud.
\newblock Linear sections of determinantal varieties.
\newblock 110(3):541--575, 1988.

\bibitem[GH85]{GH1985}
Phillip Griffiths and Joe Harris.
\newblock On the {N}oether-{L}efschetz theorem and some remarks on
  codimension-two cycles.
\newblock {\em Math. Ann.}, 271(1):31--51, 1985.

\bibitem[Gre88]{green1988}
Mark~L. Green.
\newblock A new proof of the explicit {N}oether-{L}efschetz theorem.
\newblock {\em J. Differential Geom.}, 27(1):155--159, 1988.

\bibitem[Gre89]{green1989}
Mark~L. Green.
\newblock Components of maximal dimension in the {N}oether-{L}efschetz locus.
\newblock {\em J. Differential Geom.}, 29(2):295--302, 1989.

\bibitem[Gri69]{gr69}
Phillip~A. Griffiths.
\newblock On the periods of certain rational integrals. {I}, {II}.
\newblock {\em Ann. of Math. (2) 90 (1969), 460-495; ibid. (2)}, 90:496--541,
  1969.

\bibitem[Har85]{Harris85}
Joe Harris.
\newblock An introduction to infinitesimal variations of {H}odge structures.
\newblock In {\em Workshop {B}onn 1984 ({B}onn, 1984)}, volume 1111 of {\em
  Lecture Notes in Math.}, pages 51--58. Springer, Berlin, 1985.

\bibitem[KO68]{kaod68}
Nicholas~M. Katz and Tadao Oda.
\newblock On the differentiation of de {R}ham cohomology classes with respect
  to parameters.
\newblock {\em J. Math. Kyoto Univ.}, 8:199--213, 1968.

\bibitem[Lef50]{lefschetz1924}
S.~Lefschetz.
\newblock {\em L'analysis situs et la g\'eom\'etrie alg\'ebrique}.
\newblock Gauthier-Villars, Paris, 1950.

\bibitem[Mac05]{Maclean05}
Catriona Maclean.
\newblock A second-order invariant of the {N}oether-{L}efschetz locus and two
  applications.
\newblock {\em Asian J. Math.}, 9(3):373--400, 2005.

\bibitem[Mil]{milneAG}
James Milne.
\newblock {\em Algebraic Geometry}.
\newblock Lecture notes available at www.jmilne.org.

\bibitem[Mov11]{ho06-1}
Hossein Movasati.
\newblock {\em Multiple Integrals and Modular Differential Equations}.
\newblock 28th Brazilian Mathematics Colloquium. Instituto de Matem\'atica Pura
  e Aplicada, IMPA, 2011.

\bibitem[Mov13]{ho18}
Hossein Movasati.
\newblock Quasi modular forms attached to {H}odge structures.
\newblock {\em Fields Communication Series, 67}, 2013.

\bibitem[MP12]{MaulikPandharipande}
Davesh Maulik and Rahul Pandharipande.
\newblock {G}romov-{W}itten theory and {N}oether-{L}efschetz theory.
\newblock {\em arXiv:0705.1653}, 2012.

\bibitem[Vai14]{Vainsencher}
Israel Vainsencher.
\newblock Hypersurfaces in $\mathbb{P}^5$ containing unexpected subvarieties.
\newblock {\em Journal of Singularities}, 9:219--225, 2014.

\bibitem[Voi88]{voisin1988}
Claire Voisin.
\newblock Une pr\'ecision concernant le th\'eor\`eme de {N}oether.
\newblock {\em Math. Ann.}, 280(4):605--611, 1988.

\bibitem[Voi89]{voisin89}
Claire Voisin.
\newblock Composantes de petite codimension du lieu de {N}oether-{L}efschetz.
\newblock {\em Comment. Math. Helv.}, 64(4):515--526, 1989.

\bibitem[Voi90]{voisin90}
Claire Voisin.
\newblock Sur le lieu de {N}oether-{L}efschetz en degr\'es {$6$} et {$7$}.
\newblock {\em Compositio Math.}, 75(1):47--68, 1990.

\bibitem[Voi91]{voisin1991}
Claire Voisin.
\newblock Contrexemple \`a une conjecture de {J}. {H}arris.
\newblock {\em C. R. Acad. Sci. Paris S\'er. I Math.}, 313(10):685--687, 1991.

\bibitem[Voi03]{vo03}
Claire Voisin.
\newblock {\em Hodge theory and complex algebraic geometry. {II}}, volume~77 of
  {\em Cambridge Studies in Advanced Mathematics}.
\newblock Cambridge University Press, Cambridge, 2003.
\newblock Translated from the French by Leila Schneps.

\bibitem[Voi13]{voisinHL}
Claire Voisin.
\newblock Hodge loci.
\newblock In {\em Handbook of moduli. {V}ol. {III}}, volume~26 of {\em Adv.
  Lect. Math. (ALM)}, pages 507--546. Int. Press, Somerville, MA, 2013.

\bibitem[Zuc77]{zu77}
Steven Zucker.
\newblock The {H}odge conjecture for cubic fourfolds.
\newblock {\em Compositio Math.}, 34(2):199--209, 1977.

\end{thebibliography}

\def\cprime{$'$} \def\cprime{$'$} \def\cprime{$'$}

\end{document}